\documentclass[12pt]{article}
\usepackage{amssymb,amsmath,enumerate, amsthm, graphicx}

\newtheorem{theorem}{Theorem}[section]
\newtheorem{lemma}[theorem]{Lemma}

\newtheorem{definition}[theorem]{Definition}

\newtheorem{conjecture}[theorem]{Conjecture}
\newtheorem{claim}[theorem]{Claim}

 \newcommand{\eps}{\varepsilon}                       
      \renewcommand{\epsilon}{\varepsilon}

\title{Spanning Trees in Graphs
of High Minimum Degree with
a Universal Vertex II:\\ A Tight Result} 
\author{
Bruce Reed\footnote{University of Victoria. Research supported by NSERC.} 
\quad
Maya Stein\footnote{University of Chile, Research supported by ANID Regular Grant 1221905, by FAPESP-ANID Investigaci\'on Conjunta grant 2019/13364-7, and by ANID PIA CMM FB210005.}
}
\date{}
\begin{document}
\maketitle

\begin{abstract}
We prove that, if $m$ is  
sufficiently large, every graph on $m+1$ vertices that has a universal vertex and minimum degree at least $\lfloor \frac{2m}{3} \rfloor$ contains each tree $T$ with $m$ edges as a subgraph.  
Our result confirms, for large $m$, an important special case of a conjecture by Havet, Reed, Stein, and Wood. 

The present paper builds on the results of a companion paper in which we proved the statement for all trees having a vertex that  is adjacent to many leaves.
\end{abstract}

\section{Introduction}
%This is the second in a series of two papers proving a result relating the minimum and the maximum degree of a graph to the occurence of all spanning trees as subgraphs. 

It is easy to see that any graph of minimum degree at least~$m$ contains a copy of each tree with $m$ edges, and that this bound is sharp.
Variants replacing the minimum degree condition with another degree condition have also been proposed. The average degree is used in the well-known
 Erd\H os--S\' os conjecture  (see~\cite{BPS2, rohzon} for recent results), and the median degree is used in the Loebl-Koml\'os-S\'os conjecture, which was
 approximately solved  in~\cite{cite:LKS-cut0, cite:LKS-cut1, cite:LKS-cut2, cite:LKS-cut3}. These variants are strengthenings of the observation at the beginning of the paragraph.

If, however, one wishes to strengthen the observation by simply  weakening the imposed bound on the minimum degree of the host graph, the problem becomes impossible. For this, it suffices to consider the disjoint union of complete graphs of order $m$. This graph has minimum degree $m-1$ and contains no tree with $m$ edges. 

But, if we are restricting our attention to spanning trees,
it is still possible to embed bounded degree  trees using a weaker minimum degree condition.  Koml\'os, Sark\"ozy and Szemer\'edi showed in~\cite{KSS} that for every $\delta >0$,  every large enough $(m+1)$-vertex graph of minimum degree at least $(1+\delta)\frac m2$ contains each tree with $m$ edges whose maximum degree is bounded by $O(\frac{n}{\log n})$. 
From the example given above, it is clear, though, that an analogue of the result from~\cite{KSS} could not be true for trees that are much smaller than the host graph, even if we would require a minimum degree of just below the size of the tree we are looking for.  

So, it seems natural to seek an additional condition to impose on the host graph to make a statement in this direction come true. A condition on the maximum degree is an obvious candidate, since we may have to embed a star with $m$ edges. The following conjecture in this respect has been put forward recently. 

\begin{conjecture}[Havet, Reed, Stein, and Wood~\cite{HRSW}]%$\!\!${\rm\bf\cite{HRSW}}
\label{conj1}
Let $m\in \mathbb N$. 
If a graph has maximum  degree at least $m$ and minimum degree at least $\lfloor \frac{2m}{ 3}  \rfloor$ then it contains every tree
with $m$ edges as a subgraph.
\end{conjecture}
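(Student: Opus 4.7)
The plan is to leverage the universal vertex $v^*$ (or in the general conjecture, the vertex of degree at least $m$) as the embedding site for a carefully chosen ``central'' vertex of $T$, and to handle the rest of the embedding using the minimum degree condition on $G - v^*$.

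Since the companion paper handles trees with a vertex adjacent to many leaves, I would assume that no vertex of $T$ has more than $\eps m$ leaf neighbors for some small constant $\eps$. Under this hypothesis, $T$ has a relatively spread-out branching structure, which will let us avoid bottlenecks when we grow the embedding.

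The core strategy has three steps. First, identify a centroid $c \in V(T)$ so that every component of $T - c$ has order at most $m/2$, and embed $c \mapsto v^*$. Second, iteratively embed the components of $T - c$ into $G - v^*$ via a greedy BFS-type routine, using the fact that $\delta(G - v^*) \geq \lfloor 2m/3 \rfloor - 1$ to guarantee that every unembedded vertex of $T$ retains a linear-size candidate set in $G - v^*$. Third, to cope with the tight budget $|V(G)| = |V(T)|$, reserve a small absorber $R \subseteq V(G) \setminus \{v^*\}$, and route the last few leaves of $T$ into $R$ by a Hall-type matching --- which works because any two vertices of $G$ share at least $\Omega(m)$ common neighbors.

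The main obstacle I anticipate is the extremal regime, in which $G$ is close to the tight construction: roughly, the graph built from three cliques of size $\sim m/3$ sharing the universal vertex $v^*$. There a greedy embedding risks trapping $T$ inside a single clique and running out of room. I would resolve this with a stability dichotomy: either $G$ is $\eps$-far from this extremal configuration, in which case the minimum degree condition gives enough ``bandwidth'' around every vertex for the greedy approach above to work, or $G$ is $\eps$-close, in which case one exploits the explicit near-tripartition of $V(G) \setminus \{v^*\}$, distributing $T$ across the three parts and routing edges between them through $v^*$. The delicate step is matching the tree's own structure (path-like versus bushy, and the sizes of its subtrees at $c$) to the host's partition in a balanced way; I expect this to require careful case analysis based on the shape of $T$.
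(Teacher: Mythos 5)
The statement you are addressing is the full conjecture of Havet, Reed, Stein, and Wood, and neither your sketch nor this paper proves it: the paper only establishes the special case where $G$ has exactly $m+1$ vertices (so the tree is spanning and the high-degree vertex is universal), and only for $m$ large. Your proposal silently specializes to that case from the start: you assume a universal vertex $v^*$, you speak of the budget $|V(G)|=|V(T)|$, and your absorption step rests on the claim that any two vertices of $G$ share $\Omega(m)$ common neighbours. That claim is simply false in the general setting of the conjecture, where $G$ may have far more than $m+1$ vertices and minimum degree $\lfloor 2m/3\rfloor$ forces no common neighbours at all between a typical pair of vertices; it is only the spanning hypothesis $|V(G)|=m+1$ that yields roughly $m/3$ common neighbours. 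So at best your plan targets the paper's Theorem~1.2, not Conjecture~1.1, which remains open.

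Even judged as an outline of the spanning case, the hard steps are named rather than carried out, and the two you lean on most are exactly where the real work lies. A greedy BFS embedding ``keeping linear-size candidate sets'' cannot run to completion when there is zero slack ($n=m+1$); the paper copes by reserving a random set $S$, deleting a set $L$ of low-degree tree vertices to create artificial slack, and only then embedding the bulk of the tree, with the leftover vertices reinserted at the end. Your ``Hall-type matching for the last few leaves'' is the analogue of that reinsertion, but verifying Hall's condition is not automatic from common neighbourhoods: it requires that the images of the parents of the deferred vertices be well spread, which the paper guarantees by choosing those images (and $S$) randomly and running fairly delicate Chernoff/Azuma arguments (Claims~3.2 and~4.2), including a separate treatment when the deferred vertices are leaves clustered under few parents --- this is also why the dichotomy on whether some vertex of $T$ has many leaf-neighbours is structural, not a harmless assumption. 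Likewise, in the extremal (near-three-clique) regime you correctly anticipate a balancing problem between the shape of $T$ and the tripartition, but resolving it is the content of the paper's Section~5: one must cut $T$ with a small separator $Z$, classify the resulting components (in particular the three-vertex paths, which are the obstruction to balanced splitting), and run a case analysis on how much of $T$ consists of such paths. Without these arguments your proposal is a plausible road map that coincides in spirit with the paper's strategy, but it does not constitute a proof of the special case, let alone of the conjecture as stated.
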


We remark that  if the minimum degree condition is replaced by the much stronger bound $(1-\gamma)m$, for a tiny constant $\gamma$, a result along the lines of  Conjecture \ref{conj1} is true~\cite{HRSW}. The conjecture also holds if the maximum degree condition is replaced by a large function in~$m$~\cite{HRSW}. 

Furthermore, an approximate version of Conjecture \ref{conj1} holds for bounded degree trees and dense host graphs~\cite{BPS}. Such an approximate version even holds for a generalised form of Conjecture \ref{conj1}, where the bound on the minimum degree is allowed to be any value between $\frac m2$ and $\frac{2m}3$, with the maximum degree obeying a corresponding bound between $2m$ and $m$ (see~\cite{BPS3} for details).

As further evidence for Conjecture \ref{conj1}, we prove that it holds when the graph has $m+1$ vertices, if $m$ is large enough. That is, we show the conjecture for the case when we are looking for a spanning tree in a large graph. 

\begin{theorem} 
\label{maint}  There is an $m_0\in\mathbb N$ such that for every $m \ge m_0$ 
every graph on $m+1$ vertices that has minimum degree at least $\lfloor \frac{2m}{ 3}  \rfloor$ and a universal vertex contains every tree
$T$ with $m$ edges as a subgraph.
\end{theorem}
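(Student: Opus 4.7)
The plan is to handle the case complementary to that of the companion paper, namely trees $T$ in which no vertex is adjacent to a large number of leaves; combined with the companion result, this will cover all trees with $m$ edges. Under this assumption on $T$, I would exploit a standard structural dichotomy: either $T$ has very few leaves in total (so it contains many degree-$2$ vertices and, in particular, many long \emph{bare paths}), or $T$ has linearly many leaves but these are distributed among many different parents (giving many small pendant stars). In both subcases $T$ decomposes into a small skeleton together with a controllable collection of bare paths or small subtrees, each of which is amenable to cluster-to-cluster embedding.

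Next, I would apply Szemer\'edi's regularity lemma to $G - u$, where $u$ is the universal vertex. The minimum-degree hypothesis passes to the reduced graph $R$, which inherits minimum degree at least $(\tfrac{2}{3}-o(1))|R|$. This is enough to produce a near-spanning triangle factor in $R$ via Hajnal--Szemer\'edi and to find Hamilton-type walks through prescribed cluster sequences. These structures form the backbone along which the bare paths (or the spread pendant stars) of $T$ are embedded, using standard regularity tricks. The universal vertex $u$ is reserved for the highest-degree vertex in the skeleton of $T$ (when such a vertex exists), and is used as a fallback to absorb parity defects or to bridge occasional gaps in the backbone.

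When $G$ is close to an extremal construction such as $\{u\} \vee K_{m/3,m/3,m/3}$, the reduced graph is forced into a rigid partite structure that must be exploited directly rather than through generic regularity-based embedding. Here I would use a stability-style argument to produce an embedding of $T$ tailored to this structure, for instance by carefully routing the skeleton and the bare paths through the three color classes, and by re-balancing leaf counts against the sizes of the color classes.

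The main obstacle I expect is precisely this near-extremal analysis. The bound $\lfloor 2m/3\rfloor$ is tight, so near-extremal host graphs leave essentially no slack in the embedding, while the ``worst'' trees for such graphs may themselves be near-extremal (e.g.\ long paths or balanced caterpillars). Reconciling the generic regularity argument with the stability argument, and ensuring both are compatible with the companion paper's case-split at the boundary between ``a vertex with many leaf-neighbors'' and ``no such vertex,'' is where the real difficulty lies.
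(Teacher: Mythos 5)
Your case split on the tree (no vertex with many leaf-neighbours, then ``many bare paths'' versus ``many leaves spread over many parents'') matches the spirit of the paper, which works with a small $\gamma$-nice subtree of $T$ that either contains many disjoint bare paths of length $5$ or many leaves. But there is a genuine gap at the heart of the proposal: you never give a mechanism that turns an almost-spanning embedding into a spanning one. The theorem asks for a tree with $m$ edges in a graph on $m+1$ vertices, so every vertex of $G$ must be used. A regularity-based argument (reduced graph, triangle factor via Hajnal--Szemer\'edi, walks through cluster sequences) inherently leaves an exceptional set and cluster round-off of size $\Theta(\eps m)$ uncovered, and your only proposed remedy --- reserving the single universal vertex $u$ to ``absorb parity defects or bridge gaps'' --- cannot account for more than a bounded number of leftover vertices. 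This is precisely the difficulty the paper is written to overcome: it deletes from a small subtree $T^*$ a set $L$ of leaves or middle vertices of bare paths, reserves a \emph{random} set $S$ of size $|L|-\lceil(\gamma/2)^4m\rceil$, embeds $T^*-L$ with the images of the parents of $L$ chosen randomly, embeds the rest of $T-L$ into $G-S$ (by the companion paper's approximate lemma in the non-extremal case, or by an ad-hoc tripartite embedding in the extremal case), and then completes the embedding by matching $L$ to $S$ together with the leftover vertices, verifying Hall's condition via Chernoff and Azuma concentration. Without such a reservoir-and-matching (absorption) step, your plan proves at best an approximate version, which is what the companion paper already established.

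A secondary problem is that the trees here have no maximum-degree restriction beyond ``no vertex adjacent to more than $\delta m$ leaves'': a spider whose legs have length $3$ has a centre of degree $\Theta(m)$ and falls into your case, while Koml\'os--S\'ark\"ozy--Szemer\'edi-type regularity embedding of spanning trees needs maximum degree $O(m/\log m)$. So even the generic (non-extremal) part of your argument would need a substantially different embedding scheme than ``standard regularity tricks''; the paper instead reuses the companion paper's non-regularity embedding lemma for exactly this reason. Your stability discussion of the extremal host $\{u\}\vee K_{m/3,m/3,m/3}$ points in the right direction (the paper's Lemma for $\gamma$-special graphs routes components of $T$ minus a small cut set through the three parts), but again it only yields the tight result once coupled with the absorption mechanism you are missing.
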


Clearly, our theorem can also be understood as a variant of the result by Koml\'os, Sark\"ozy and Szemer\'edi mentioned above.

The proof of Theorem~\ref{maint} follows quickly from a result obtained
in the companion paper~\cite{brucemaya1}, and  a second result, Lemma~\ref{theotherlemma}, which will be proved in the present paper. We present the two lemmas and give the short proof of Theorem~\ref{maint} in the next section, deferring the proof of Lemma~\ref{theotherlemma} to the subsequent sections.

\section{Proof of Theorem~\ref{maint}}

In the companion paper~\cite{brucemaya1}, we showed the following lemma.
\begin{lemma}
\label{realhighleafdegreecase}$\!\!${\rm\bf\cite[Lemma 1.3]{brucemaya1}}
For every $\delta>0$, there is an $m_\delta$ such that for any $m \ge m_\delta$  the following holds for every graph $G$ on $m+1$ vertices that has minimum degree at least $\lfloor \frac{2m}{ 3}  \rfloor$ and a universal vertex. \\ If $T$ is a tree with $m$ edges, and some vertex of $T$ is  adjacent 
to at least $\delta m$ leaves, then~$T$ embeds in $G$.
\end{lemma}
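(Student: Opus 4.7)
The plan is to combine Lemma~\ref{realhighleafdegreecase} with a new argument for trees without concentrated leaves. Fix a small constant $\delta>0$ (to be tuned), and let $m_0:=m_\delta$ from Lemma~\ref{realhighleafdegreecase}. If $T$ has a vertex adjacent to at least $\delta m$ leaves, Lemma~\ref{realhighleafdegreecase} delivers the embedding at once. The new work is the complementary case, in which every vertex of $T$ has fewer than $\delta m$ adjacent leaves. For such $T$ one has a useful dichotomy: either $T$ has few leaves overall, which by a standard degree-sum argument forces many vertices of $T$ to have degree exactly $2$ and hence produces many long \emph{bare paths} (paths of internal $T$-vertices of degree $2$); or $T$ has many leaves but these are spread across many parents, each carrying at most $\delta m$ of them. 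Either alternative yields a decomposition of $T$ into a bounded number of subtrees $T_1,\dots,T_k$ of size at most $\eta m$ each (with $\eta\ll\delta$), joined along a skeleton of constantly many cut-edges, and with many flexible ``cut points'' available for rebalancing sizes.

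I would then apply the Szemer\'edi regularity lemma to $G-u$, where $u$ is the universal vertex, producing a reduced graph $R$ with inherited approximate minimum degree $(\tfrac{2}{3}-o(1))|V(R)|$. In the \emph{non-extremal} case, this is enough to guarantee a connected spanning structure in $R$---for instance a Hamilton path of regular pairs with enough additional regular edges to accommodate the skeleton's branching---into which the subtrees $T_i$ can be embedded by standard tree-embedding techniques for regular pairs. The universal vertex $u$ is reserved to play the role of a designated skeleton vertex, which eliminates any need to engineer a high-degree branching vertex in $R$. A small absorbing reservoir set aside at the outset allows the embedding to be made spanning, and the decomposition of $T$ guarantees enough slack at the cut-edges for the absorbing step to succeed.

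The main obstacle is the \emph{extremal} case, when $G$ is close to a graph $G^\star$ that saturates the minimum degree bound. Up to small perturbation, $G^\star$ consists of the universal vertex $u$ together with three parts $V_0,V_1,V_2$ of size roughly $m/3$ in which all possible edges are present except between $V_1$ and $V_2$; in $G^\star$ the vertices of $V_0$ are also universal. Here regularity may not supply a usable spanning structure in $R$, because the cut between the two halves of $G^\star$ is a genuine bottleneck of $G$. The plan is instead to exhibit an explicit tripartition of $T$ into two subtrees that live in $V_0\cup V_1$ and $V_0\cup V_2$ respectively, joined through $u$ or through a chosen vertex of $V_0$. The hard step is to prove that every tree without concentrated leaves admits such a tripartition with prescribed part-sizes and a prescribed cut-structure matching $G^\star$; this is where the bare-paths/spread-leaves dichotomy must be invoked in full strength to provide the required flexibility. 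A stability argument is then used to propagate the conclusion from $G=G^\star$ to all $G$ in a small neighbourhood of $G^\star$, completing the proof.
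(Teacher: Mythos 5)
Your proposal does not actually prove the statement in question: it is circular. The statement to be proved is precisely Lemma~\ref{realhighleafdegreecase}, i.e.\ the case in which some vertex of $T$ is adjacent to at least $\delta m$ leaves. Your very first step is ``if $T$ has a vertex adjacent to at least $\delta m$ leaves, Lemma~\ref{realhighleafdegreecase} delivers the embedding at once'' --- that is, you invoke the target statement as a black box. Everything that follows (the bare-paths/spread-leaves dichotomy, the regularity-lemma embedding in the non-extremal case, the explicit tripartition of $T$ matching the extremal structure $V_0,V_1,V_2$) is an attack on the \emph{complementary} case, trees in which no vertex has many adjacent leaves. That is the content of Lemma~\ref{theotherlemma}, which the present paper proves by entirely different means (a $\gamma$-nice subtree $T^*$, randomly reserved sets $S$ and $N$, Hall/Chernoff/Azuma absorption arguments, and an ad-hoc treatment of the $\gamma$-special case); it is not the content of the lemma you were asked to prove. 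In effect you have sketched an alternative architecture for Theorem~\ref{maint}, while leaving the statement itself untouched.

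For the record, the paper does not reprove Lemma~\ref{realhighleafdegreecase} here either: it is imported verbatim from the companion paper (Lemma~1.3 of~\cite{brucemaya1}). A blind proof of it would have to exploit the hypothesis you never use, namely the presence of a vertex $t$ with at least $\delta m$ adjacent leaves: the natural route is to set those leaves aside (so that the remaining tree has at most $(1-\delta)m$ edges, giving $\delta m$ vertices of slack in the host graph), embed $t$ on the universal vertex or another vertex of high degree, embed the pruned tree using the minimum degree $\lfloor\frac{2m}{3}\rfloor$ plus that slack, and finally complete the embedding by matching the reserved leaves to the unused vertices (a Hall-type argument driven by the degree condition). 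None of this appears in your write-up, so as a proof of the stated lemma it has a fatal gap rather than a repairable one.
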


Lemma~\ref{realhighleafdegreecase} covers the proof of our main result  for all trees which have a vertex with many leaves, namely at least $\delta m$ leaves, for some fixed $\delta$, but is of no help for trees which have no such vertex.
 This latter case is covered by the next lemma which will be proved in the present paper.
 
\begin{lemma}\label{theotherlemma}
There are $m_1\in \mathbb N$, and  $\delta>0$ such that the following holds for every $m\geq m_1$, and every graph $G$ on $m+1$ vertices that has minimum degree at least $\lfloor \frac{2m}{ 3}  \rfloor$ and a universal vertex.\\
If $T$ is a tree with $m$ edges such that no vertex of $T$ is adjacent to more than $\delta m$ leaves, then $T$ embeds in $G$.
\end{lemma}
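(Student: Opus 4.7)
The plan is to combine a careful structural analysis of $T$ with an application of Szemer\'edi's regularity lemma to the host graph $G$. Since we are outside the regime of Lemma~\ref{realhighleafdegreecase}, every vertex of $T$ is adjacent to fewer than $\delta m$ leaves. This forces $T$ to be \emph{spread out}: either $T$ has relatively few leaves in total (and hence many degree-two vertices, yielding long bare paths), or else the leaves of $T$ are distributed among many distinct parent vertices. In either case I expect $T$ to admit a decomposition into a bounded-size ``skeleton'' of branch vertices connected by long bare paths, plus a family of small attached pieces that are easy to embed.

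The universal vertex $u$ will be reserved as a joker: it is used as the image of a carefully chosen vertex $v^{\ast}$ of $T$, exploiting the fact that $u$ is adjacent to every other vertex of $G$. After removing $u$, the graph $G-u$ has $m$ vertices and minimum degree at least $\lfloor 2m/3\rfloor-1$, so in particular it is Hamiltonian and highly connected. I would apply Szemer\'edi's regularity lemma to $G-u$ to obtain a reduced graph $R$ with minimum degree at least $(2/3-\eta)|R|$ for any prescribed small $\eta>0$. A Dirac-type argument then gives a Hamilton cycle in $R$, and plenty of regular pairs of positive density that can absorb small tree pieces.

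The embedding of $T$ proceeds in two stages. First, choose $v^{\ast}$ to be a centroid of $T$, so that each component of $T-v^{\ast}$ has at most $m/2$ vertices, and map $v^{\ast}$ to $u$. Second, distribute the subtrees hanging off $v^{\ast}$ among the clusters of $R$ in a balanced way, using a Hall-type assignment that respects the Hamilton cycle of $R$ so that long bare paths of $T$ can thread between adjacent clusters. Within each regular pair, the attached pieces and skeleton pieces are then embedded greedily via the standard embedding lemma for trees in regular pairs, routing the rare branch vertices through clusters that still have slack. Because no vertex of $T$ has many leaves, no single cluster is ever asked to absorb an unusually concentrated mass of leaves, so the balance condition can be maintained throughout the process.

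The main obstacle, and where I expect most of the technical work to go, is the near-extremal case: $G-u$ close to a union of two nearly-disjoint cliques of size roughly $2m/3$ sharing a bottleneck set $S$ of size roughly $m/3$. There the regularity lemma is too blunt, because every edge of the embedding of $T$ that crosses between the two cliques must be routed through $S$ or through $u$, and only a single edge may use~$u$. Bounding the number of such crossings in terms of the tree structure, and using the long bare paths of $T$ (which exist precisely because leaves are not concentrated at any one vertex) to thread them through $S$, is the key technical difficulty. I would therefore split the argument into a non-extremal case, handled cleanly by the regularity approach sketched above, and an extremal stability case, handled by a direct constructive embedding that exploits the near-partition structure of $G$ together with the path-rich structure of $T$ forced by the no-many-leaves hypothesis.
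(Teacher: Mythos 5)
Your plan has several genuine gaps, and the most serious one is structural. You claim that since no vertex of $T$ is adjacent to more than $\delta m$ leaves, $T$ decomposes into a \emph{bounded-size} skeleton of branch vertices joined by long bare paths plus small attached pieces. This is false: a complete binary tree with $m$ edges satisfies the hypothesis trivially (each parent has at most two leaf children), yet it has $\Theta(m)$ branch vertices and no bare path of length more than two, so there is no bounded skeleton and nothing to ``thread'' along a Hamilton cycle of the reduced graph. The correct consequence of the hypothesis is much weaker and more local — essentially Lemma~\ref{cutthetree} of the paper: one can find a \emph{small} subtree ($\le\gamma m$ vertices, hanging off a single vertex) that contains either many disjoint degree-$\le 2$ paths of length five or many leaves. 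Relatedly, the hypothesis does \emph{not} bound $\Delta(T)$: a spider whose $m/3$ legs are paths of length three has a centre of degree $m/3$ adjacent to no leaves at all. Hence the ``standard embedding lemma for trees in regular pairs,'' which needs bounded degree (or at best $O(m/\log m)$, as in the Koml\'os--S\'ark\"ozy--Szemer\'edi theorem cited in the introduction), does not apply to the trees you must handle; your sketch never says how a linear-degree vertex of $T$ is embedded, and this is exactly the difficulty that forces the universal vertex into the statement.

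The second gap is exactness. $T$ is a \emph{spanning} tree of $G$, so every vertex of $G$ must be used, including the regularity exceptional set and the last few vertices of each cluster, where regular pairs no longer provide usable neighbourhoods; ``greedy embedding plus a Hall-type balanced assignment'' does not get you past this. The paper spends most of its effort precisely here: it reserves a random set $S$ and a set $L$ of low-degree tree vertices with $|S|\le |L|-\lceil(\gamma/2)^4m\rceil$ (Lemma~\ref{bruce}), embeds $T-L$ with that slack, and then absorbs $S$ together with the leftover via a Hall-type matching whose existence is certified by Chernoff/Azuma concentration; you propose no analogous mechanism. Finally, you correctly identify the extremal configuration (two cliques of size about $2m/3$ overlapping in a set of size about $m/3$, i.e.\ the $\gamma$-special structure of Definition~\ref{gammaspecial}) but then explicitly defer it as ``the key technical difficulty.'' In the paper this case is a full section (Lemma~\ref{maya2}), requiring a decomposition of $T-T^*$ into components of controlled size (Claim~\ref{partition_of_A}) and a case analysis on the three-vertex path components; without an argument there, and without fixes for the two issues above, the proposal is an outline of intent rather than a proof.
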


The proof of Lemma~\ref{theotherlemma} is given in the next section,  Section~\ref{proofofthel}. It will rely on four auxiliary lemmas, Lemmas~\ref{cutthetree}, \ref{bruce}, \ref{maya1}, and~\ref{maya2}, of which one is proved right away in Section~\ref{proofofthel}, one is from~\cite{brucemaya1}, and the remaining two will be proved in later sections of the present paper.

With Lemma~\ref{realhighleafdegreecase} and  Lemma~\ref{theotherlemma} at hand, the proof of our main result, Theorem~\ref{maint}, is straightforward.

\begin{proof}[Proof of Theorem~\ref{maint}]
We choose our output $m_0$ for Theorem~\ref{maint} by taking the maximum value of $m_1$ and $m_\delta$, where $m_1$ and $\delta$ are given by Lemma~\ref{theotherlemma}, and $m_\delta$ is given for input $\delta$ by Lemma~\ref{realhighleafdegreecase}.
Given now $T$ and $G$ as in the theorem, Lemma~\ref{theotherlemma} covers the case that  $T$ has no vertex adjacent to more than $\delta m$ leaves, and Lemma~\ref{realhighleafdegreecase} covers the remaining case. 
\end{proof}

\section{The proof of Lemma~\ref{theotherlemma}}\label{proofofthel}

We start by giving a quick overview of the proof of Lemma~\ref{theotherlemma} in Section~\ref{idea}.  As mentioned earlier, we formally organise the proof of Lemma~\ref{theotherlemma} by splitting it up into four auxiliary lemmas, namely Lemma~\ref{cutthetree}, Lemma~\ref{bruce}, Lemma~\ref{maya1}, and Lemma~\ref{maya2}. 
These four auxiliary lemmas  will be stated in Section~\ref{4lemmas}. Section~\ref{proofofthelemma} then contains the proof of Lemma~\ref{theotherlemma}, under the assumption that the four auxiliary lemmas hold. 

The easy proof of Lemma~\ref{cutthetree} is given in Section~\ref{gammalemma}. 
Lemma~\ref{maya1} was proved in~\cite{brucemaya1}. So, at the end of this section, there will be only two lemmas, Lemma~\ref{bruce} and Lemma~\ref{maya2}, left  to prove. 
 In  Sections~\ref{br1} and~\ref{br2}, we  state and prove two new lemmas, Lemma~\ref{easybruce} and Lemma~\ref{hardbruce},  which together imply Lemma~\ref{bruce}. 
The last section of the paper, Section~\ref{ma2}, is devoted to the proof  of  Lemma~\ref{maya2}.

\subsection{Idea of the proof of Lemma~\ref{theotherlemma}}\label{idea}

The idea of the proof is to first reserve a random set $S\subseteq V(G)$ for later use. Then, we embed into $G-S$ a very small subtree $T^*$ of the tree $T$ we wish to embed. Actually, we will only embed  $T^*-L$, having chosen a subset $L\subseteq V(T^*)$ of some low degree vertices (either leaves or vertices of degree 2). The vertices from $L$ will be left out of the embedding for now, as they will only be embedded at the very end.
 
The set $L$ is slightly larger than the set $S$. This gives us some free space when we embed $T-T^*$, which will be useful. In fact, this freedom makes it possible for us to use a  lemma from~\cite{brucemaya1} (stated as Lemma~\ref{maya1} in the present paper) for embedding $T-T^*$, unless the graph $G$ has a very special structure, in which case an ad-hoc embedding is provided by Lemma~\ref{maya2}. After this, there is a small leftover set of vertices of~$G$, which, together with the set $S$, serves for embedding the  vertices from
 $L$, by using an absorption argument.

\subsection{Four auxiliary lemmas}\label{4lemmas}

In the present section, we present our four  auxiliary lemmas, Lemma~\ref{cutthetree}, Lemma~\ref{bruce}, Lemma~\ref{maya1}, and Lemma~\ref{maya2}.

We start with the simplest of our  lemmas,  Lemma~\ref{cutthetree}. This lemma enables us to find a convenient subtree $T^*$ of a tree~$T$.
We need a quick definition before we give the lemma.

\begin{definition}[$\gamma$-nice subtree, type 1, type 2]\label{gamma-nice}
Let $T$ be a tree with $m$ edges.
Call  a subtree~$T^*$ of  $T$ with root $t^*$ a {\it $\gamma$-nice subtree} if
\begin{enumerate}[(i)]
\item $|V(T^*)|\le\gamma m$; and \label{Tstarsmall}
\item every component of $T-T^*$  is adjacent to $t^*$.\label{compo}
\end{enumerate}
Consider the following additional conditions:
\begin{enumerate}[(1)]
\item $T^*$ contains at least $\lceil\frac{\gamma m}{20}\rceil$ disjoint paths of length $5$ and all vertices on these paths have degree at most $2$ in~$T$. \label{manypathsinT^*2}
\item   $T^*$ contains at least $\lceil\frac{\gamma m}{40}\rceil$ leaves from $T$.\label{manyleavesinT^*22}
\end{enumerate}
If the former condition holds, we say $T^*$ is of type 1, and if the latter condition holds, we say $T^*$ is of type 2.
\end{definition}

We are now ready to state the lemma that finds a $\gamma$-nice subtree of one of the two types.

\begin{lemma}\label{cutthetree}
For all $0<\gamma\le 1$, any tree with  $m\ge \frac {200}\gamma$ edges has a $\gamma$-nice subtree of type 1 or of type 2.
\end{lemma}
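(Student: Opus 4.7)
I would proceed by case analysis on the leaf structure of $T$.

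\emph{Case 1.} If some vertex $v \in V(T)$ has at least $\lceil\gamma m/40\rceil$ leaf-neighbors, set $t^* := v$ and let $T^*$ consist of $v$ together with exactly $\lceil\gamma m/40\rceil$ of these leaf-neighbors. Then $|V(T^*)| = 1+\lceil\gamma m/40\rceil \le \gamma m$ by the assumption $m \ge 200/\gamma$, and condition~(ii) holds because every component of $T - T^*$ is either an unchosen leaf-neighbor of $v$ (a singleton adjacent to $v$) or a complete non-leaf subtree of $T$ attached to $v$, and hence adjacent to $t^* = v$. This gives a type~2 $\gamma$-nice subtree.

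\emph{Case 2.} Otherwise no vertex has so many leaf-neighbors. Root $T$ at an arbitrary non-leaf vertex and let $\sigma(u)$ denote the size of the rooted subtree $T_u$ and $\lambda(u)$ the number of leaves of $T$ in $T_u$. Let $v^*$ be a deepest vertex with $\sigma(v^*) > \gamma m/2$; by maximality, every child $c$ of $v^*$ has $\sigma(c) \le \gamma m/2$. Set $t^* := v^*$ and construct $T^*$ as $\{v^*\}$ together with either all of $T_{v^*}$ (if $\sigma(v^*) \le \gamma m$) or a greedy union of children's subtrees added one at a time until $|V(T^*)|$ first exceeds $\gamma m/2$; in either case $|V(T^*)| \in (\gamma m/2, \gamma m]$, giving condition~(i), while condition~(ii) is automatic since $T^* \setminus \{t^*\}$ is a union of complete subtrees attached at $t^*$. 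If this $T^*$ contains at least $\lceil\gamma m/40\rceil$ leaves of $T$, type~2 is satisfied.

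The hard part will be to verify type~1 when $T^*$ contains fewer than $\lceil\gamma m/40\rceil$ leaves of $T$. A consequence of condition~(ii) is that every $u \in T^* \setminus \{t^*\}$ has $\deg_{T^*}(u) = \deg_T(u)$, so the branching vertices ($\deg_T \ge 3$) of $T$ inside $T^*$ are precisely those that break maximal degree-$\le 2$ runs in $T^*$. Using $B \le L - 2$ and the branching-degree identity $\sum_{\deg_T(v)\ge 3}(\deg_T(v)-1) = L+B-2$, the number of such interruptions is controlled, and the remaining degree-$\le 2$ runs of $T^*$ contain $\lceil\gamma m/20\rceil$ disjoint length-5 paths provided $L$ is not too large. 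The remaining sub-case, when $T$ has many leaves overall but our $T^*$ still contains few, I would handle by rerunning the construction with a greedy order at $v^*$ that favors leaf-dense children, or by choosing $v^*$ as the deepest vertex with $\lambda(v^*) \ge \lceil\gamma m/40\rceil$ and carefully selecting children's subtrees to retain enough leaves within the size budget. Balancing these three scenarios is the main obstacle.
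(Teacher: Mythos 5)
Your construction of $T^*$ in Case 2 is essentially the paper's (a deepest vertex $t^*=v^*$ with more than $\gamma m/2$ descendants, plus whole child subtrees until the size lands in $(\gamma m/2,\gamma m]$), and your Case 1 is correct though superfluous. The genuine gap is exactly the part you flag as "the main obstacle": the verification of type 1 when $T^*$ contains fewer than $\lceil\gamma m/40\rceil$ leaves of $T$. Your sketch for it is misdirected, because you bound the branch vertices inside $T^*$ through the \emph{global} quantities $L$ and $B$ of $T$ ("provided $L$ is not too large"), and you then worry about a sub-case in which $T$ has many leaves overall while $T^*$ captures few, proposing to rerun the construction favouring leaf-dense children or to choose $v^*$ by a leaf-count criterion. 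That sub-case does not exist and no rerunning is needed: the number of leaves of $T$ outside $T^*$ is irrelevant.

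The missing observation is the local one, which you in fact already state: since every component attached at $t^*$ is a full component, every $u\in V(T^*)\setminus\{t^*\}$ satisfies $\deg_{T^*}(u)=\deg_T(u)$, so the leaves of $T^*$ other than possibly $t^*$ are precisely the leaves of $T$ lying in $T^*$. Hence, in the case at hand, $T^*$ has at most $\lfloor\gamma m/40\rfloor+1$ leaves \emph{as a tree}, and therefore at most $\lfloor\gamma m/40\rfloor$ vertices of degree at least $3$ in $T^*$ (and their number of incident path-breaks is controlled by $\sum_{\deg_{T^*}(v)\ge3}(\deg_{T^*}(v)-1)$, again bounded via the leaves of $T^*$, not of $T$). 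Deleting these vertices together with $t^*$ (whose $T$-degree may exceed its $T^*$-degree) leaves at least roughly $\tfrac{19}{40}\gamma m$ vertices, all of degree at most $2$ in $T$, arranged in at most about $\gamma m/20$ paths; trimming each to length divisible by five and cutting into length-five pieces yields the required $\lceil\gamma m/20\rceil$ disjoint paths, exactly as in the paper. Once this local bound is in place, your Case 1 is also unnecessary: the single construction of Case 2 together with the dichotomy "many or few leaves of $T$ inside $T^*$" already decides between type 2 and type 1.
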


The proof of Lemma~\ref{cutthetree} is straightforward, but we prefer to leave it to the end of the present section, namely to Subsection~\ref{gammalemma}, 
 in order to be able to first  focus on the proof of the main result.
 
Next, we exhibit a lemma that will enable us to transfer the embedding problem of the tree to an embedding problem of almost all of the tree, under the condition that we already embed a small part of it, i.e.~a $\gamma$-nice subtree, beforehand.

For convenience,  let us use the following notation.
\begin{definition}[$m$-good graph]
Let $m\in\mathbb N$. Call 
 a graph  {\it $m$-good} if it has $m+1$ vertices, minimum degree at least $\lfloor \frac{2m}{ 3}  \rfloor$ and a universal vertex. 
\end{definition}

\begin{lemma}
\label{bruce}
There is an $m_0\in \mathbb N$ such that the following holds for all $m \ge m_0$, and all $\gamma $ with  $\frac 2{10^{7}}\le\gamma<\frac 1{30}$. 

Let $G$ be an $m$-good graph, with universal vertex $w$. Let $T$ be a tree with $m$ edges, such that no vertex of $T$ is  adjacent to more than $\frac m{10^{23}}$ leaves. Let~$T^*$ be a $\gamma$-nice
 subtree of $T$, of type 1 or 2, rooted at vertex $t^*$. \\
 Then there are  sets $L\subseteq V(T^*)\setminus\{t^*\}$ and $S\subseteq V(G)$  satisfying
$$|S| \leq |L|-\lceil(\frac{\gamma}2)^4 m\rceil.$$ Furthermore, for any $w'\in V(G)-S$, with $w'\neq w$, there is 
 an embedding of $T^*-L$ into  $G-S$, with~$t^*$ embedded in $w'$, such that the following holds. Any embedding of $T-L$ into $G-S$ extending our embedding of $T^*-L$  can be extended to an embedding of all of $T$ into $G$.
\end{lemma}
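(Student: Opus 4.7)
The plan is to split by the type of the $\gamma$-nice subtree $T^*$ supplied by Lemma~\ref{cutthetree}, which we expect to correspond to Lemma~\ref{easybruce} and Lemma~\ref{hardbruce}. In either case we first reserve a small set $S\subseteq V(G)\setminus\{w\}$ with the property that every vertex of $G$ has approximately $\tfrac{2}{3}|S|$ neighbours in $S$, obtained by a random selection together with the minimum-degree hypothesis and Chernoff concentration. Suppose $T^*$ is of type~$2$. We let $L$ be a set of $\lceil \gamma m/40\rceil$ leaves of $T$ lying in $T^*$, so that each $\ell\in L$ has a unique neighbour (its parent) in $V(T^*)\setminus L$, which will be already embedded once we have placed $T^*-L$. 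We then produce an embedding of $T^*-L$ into $G-S$ sending $t^*$ to $w'$, done greedily using the density of $G-S$ and the universal vertex $w$. Absorption of $L$ then reduces to a bipartite matching problem: pair each $\ell\in L$ with an unused neighbour of its parent's image in $R\cup S$, where $R$ denotes the leftover of $G-S$ once the (arbitrary) extension to an embedding of $T-L$ has been applied, and $|R|=|L|-|S|\ge\lceil(\gamma/2)^4 m\rceil$ by the size choice.

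In the type-$1$ case we instead let $L$ consist of two consecutive interior degree-$2$ vertices from each of the $\lceil\gamma m/20\rceil$ disjoint length-$5$ paths in $T^*$, so $|L|\ge\gamma m/10$. For each such path the embedding of $T^*-L$ leaves a gap between the images of two remaining path-vertices $y,z$ in $G-S$, and absorbing the two missing $L$-vertices amounts to finding a length-$3$ path from $y$ to $z$ through two previously unused vertices of $R\cup S$, with the connector paths for different gaps using disjoint absorbers. We plan to choose $S$ so that every ordered pair of vertices in $V(G)-S$ enjoys many candidate length-$3$ connectors through $S$ (again by Chernoff on a random $S$), and then extract a disjoint system of such connectors greedily or by a Hall-type argument on an auxiliary bipartite graph of (gaps, candidate connector pairs).

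The principal obstacle in either variant is that the absorption step must succeed for \emph{every} extension of the embedding of $T^*-L$, since $R$ is determined only after the rest of $T-L$ has been placed and is not under our control. Hall's condition for the full set $L$ demands $|N_G(P)\cap(R\cup S)|\ge|L|$ with $P$ the set of parent-images, and since $|L|>\tfrac{2}{3}|S|$ the trivial neighbourhood bound through $S$ alone falls short; in an extremal configuration of $G$ a common non-neighbourhood of a small cluster of parents could exhaust $R\cup S$. To overcome this we plan to design the embedding of $T^*-L$ adaptively in $w'$: using the universal vertex $w$ together with the abundant candidate images in $G-S$, we place the parents of $L$ so that their images are genuinely spread through $G$, forcing $|N_G(P)|$ to be substantially larger than $\tfrac{2m}{3}$, and in particular forcing $N_G(P)\supseteq R\cup S$ for any possible $R$. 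The analogous step in the type-$1$ case spreads the gap endpoints of the length-$5$ paths, so that the required disjoint connectors can always be routed through $S$. We expect this spreading-embedding argument, together with its interaction with Hall-type absorption, to form the technical heart of Lemmas~\ref{easybruce} and~\ref{hardbruce}.
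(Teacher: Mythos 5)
Your skeleton matches the paper's (split by the type of $T^*$, choose $S$ randomly, reduce absorption to a matching problem, and note that the leftover $R$ is adversarial), but the two places where you defer the work are exactly where the actual difficulty lies, and the mechanisms you sketch there do not suffice. For type 2, your fix is to ``spread'' the parents of $L$ so that $N_G(P)\supseteq R\cup S$ for every possible $R$. That targets only the Hall condition for the \emph{full} set $L$; the binding constraints are large subsets $L'$ whose parents' images cluster, e.g.\ in a near-tripartite extremal $G$ where the images of parents carrying $0.7|L|$ leaves could all land in one part $X_1$, so that their joint neighbourhood misses a third of $S$ and Hall fails even though $N_G(P)$ covers everything. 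Moreover the parents are internal vertices of $T^*$ whose placement is constrained by the tree structure, so ``adaptive spreading'' needs a quantified mechanism; the paper instead embeds $T^*-L$ by sending each vertex to a \emph{uniformly random} neighbour of its parent's image and controls the clustering via Azuma's inequality, and it places the single parent $t$ carrying the most leaves on the universal vertex $w$ (combined with the hypothesis that no vertex has more than $m/10^{23}$ leaves) to kill the one subset-constraint that concentration cannot reach. Your proposal contains neither the random-embedding/Azuma step nor the precise role of $w$, and these are the heart of Lemma~\ref{hardbruce}.

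For type 1 your design is also structurally different in a way that creates a new, unaddressed problem. You remove \emph{two consecutive} interior vertices from each length-$5$ path, so absorption means partitioning \emph{all} of $S\cup R$ into adjacent pairs, each pair forming a length-$3$ connector between prescribed endpoints: since $T$ is spanning, every leftover vertex must be consumed, so ``extracting a disjoint system of connectors greedily or by a Hall-type argument'' is not enough -- you need a perfect pairing of $S\cup R$ compatible with the gaps, to which Hall's theorem does not directly apply. The paper avoids this by removing only \emph{one} vertex (the fourth) per path, so absorption is a bijection from $L$ onto $S\cup R$ sending each $x$ to a common neighbour of the images $n_1(x),n_2(x)$ of its two neighbours; crucially those neighbour-images are themselves placed in a \emph{random} set $N$, and the verification of Hall's condition uniformly over all adversarial $R$ is a genuinely involved double-randomness argument (the sets $\mathcal Q'$, $\mathcal Q''$, $\mathcal Q''_+$ and the iteratively built $S^*$), again precisely to rule out the clustered/extremal configurations. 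As written, your proposal acknowledges these obstacles but does not supply arguments that would close them, so it is a plausible plan rather than a proof.
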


As mentioned earlier, later on  we shall split Lemma~\ref{bruce} into two lemmas,
Lemma  \ref{easybruce} and  Lemma \ref{hardbruce}, depending on the type of the $\gamma$-nice subtree. We will state and prove Lemma \ref{easybruce}  in Section~\ref{br1}, and state and prove  Lemma \ref{hardbruce} in Section~\ref{br2}. 
Together, Lemmas  \ref{easybruce} and   \ref{hardbruce} imply Lemma~\ref{bruce}.

In order to state the remaining two of our four auxiliary lemmas, we need a simple definition. This definition describes the extremal case, where the graph $G$ has a very specific structure (and therefore, the approach from the companion paper~\cite{brucemaya1} does not work).

\begin{definition}\label{gammaspecial}
Let $\gamma>0$. We say a graph $G$ on $m+1$ vertices is $\gamma$-special if $V(G)$ consists of three mutually disjoint sets $X_1,X_2,X_3$ such that
\begin{itemize}
\item  $\frac{m}{3}-3{\gamma} m\leq |X_i|\leq\frac{m}{3}+3{\gamma} m$ for each $i=1,2,3$; and
\item there are at most $\gamma^{10} |X_1|\cdot|X_2|$ edges between $X_1$ and $X_2$.
%; and 
%\item every vertex of $X_3$ sees all but $10^{6} \sqrt{\gamma} m$ vertices of $X_1 \cup X_2$
\end{itemize}
\end{definition}

%Observe that the trees $T^*$ from Lemmas~\ref{easybruce} and~\ref{hardbruce} are $\gamma$-nice.

\medskip

The following lemma, which excludes the extremal situation, was proved  in the companion paper~\cite{brucemaya1}.

\begin{lemma}$\!\!${\rm\bf\cite[Lemma 7.3]{brucemaya1}}
\label{maya1}
For all $\gamma<\frac 1{10^6}$ 
 there are $m_0\in\mathbb N$ and  $\lambda>0$ such that the following holds for all $m\geq m_0$. \\ Let $G$ be an $m$-good  graph,  which is not  $\gamma$-special.  Let~$T$ be a tree with $m$ edges such that $T\not\subseteq G$ and no vertex in $T$ is adjacent to more than $\lambda m$ leaves.  Let~$T^*$ be a $\gamma$-nice subtree of~$T$, with root $t^*$,  let $L\subseteq V(T^*)\setminus\{t^*\}$, and let $S\subseteq V(G)$ such that $|S|\le |L|-\lceil(\frac\gamma 2)^4 m\rceil$.\\
 Assume that for any  $W\subseteq V(G)-S$ with $|W|\geq \gamma m$, there is an embedding $\phi_W$ of $T^*-Y$ into $G- S$, with $t^*$ embedded in $W$. Then there is a set $W\subseteq V(G)-S$ with $|W|\geq \gamma m$, and an embedding of $T-Y$  into $G-S$ that extends $\phi_W$. 
%If there is an embedding of $T^*-L$ into $G- S$, then
% there is an embedding of $T-L$  into $G-S$ extending the 
%embedding of $T^*-L$. 
\end{lemma}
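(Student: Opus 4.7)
The plan is to exploit the flexibility granted by the hypothesis — namely that we may place $t^*$ at any vertex of any prescribed set $W\subseteq V(G)-S$ of size at least $\gamma_0 m$ — to pick a favourable image $w'$ of $t^*$, and then extend to all of $T-L$ by embedding the components of $T-T^*$ into the residual graph $G' := G - S - \mathrm{im}(T^*-L)$, using both the universal vertex $w$ and the non-$\gamma$-special structure of $G$. Note that $|V(G')| - |V(T-T^*)| = |L|-|S|\ge\lceil(\gamma/2)^4 m\rceil$, so we have a buffer of $\Omega(\gamma^4 m)$ free vertices, and $\delta(G')\ge\lfloor 2m/3\rfloor - |S| - |V(T^*)|\ge \tfrac{2m}{3}-O(\gamma m)$, which is well above $|V(G')|/2$.

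First, I would formalise a notion of a \emph{good} placement for $t^*$: call a vertex $v\in V(G)-S$ good if its $G$-neighbourhood, intersected with the vertex set that will remain free after embedding $T^*-L$, is rich enough to host the roots of all components of $T-T^*$ incident to $t^*$ in $T$. Using only that $\delta(G)\ge\lfloor 2m/3\rfloor$, $|V(T^*)|\le\gamma m$, and that $T$ has at most $m+1$ vertices, one can check that all but a small portion of $V(G)-S$ is good, so the set $W$ of good vertices has size at least $\gamma_0 m$. Invoking the hypothesis with this $W$ yields an embedding of $T^*-L$ with $t^*$ placed at some good $w'\in W$.

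Next, order the components $C_1,\dots,C_k$ of $T-T^*$ by decreasing size. Each $C_i$ is rooted at a vertex $c_i$ adjacent to $t^*$ in $T$, which must be embedded to a then-free neighbour of $w'$. Embed the $C_i$ iteratively: for each $i$, pick an unused neighbour of $w'$ in the current residual graph to host $c_i$, then grow the rest of $C_i$ edge by edge, using that the residual graph retains minimum degree comfortably above half its order. For the large components (of which there are few, since their sizes sum to at most $m$), we need more than just Dirac-type minimum degree: here I would invoke the non-$\gamma$-special hypothesis to locate a scaffold (e.g.\ a long path or a star-forest) inside the residual neighbourhood of $w'$, around which the component is embedded. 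The universal vertex $w$ serves as an emergency absorber for the root of one problematic component at the end. Throughout, the bound $\lambda m$ on the leaf-degree of $T$ ensures that no single image vertex is required to have a huge number of free neighbours remaining at the moment it is embedded, provided $\lambda$ is chosen small enough relative to $\gamma^4$.

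The main obstacle is the passage from the global hypothesis that $G$ is not $\gamma$-special to the local structural statement needed for the embedding step. Being $\gamma$-special is a property of $G$ as a whole, yet we work inside the residual graph $G'$ obtained by deleting $O(\gamma m)$ vertices; one must show that deleting such a small set cannot create the forbidden tripartite structure inside $G'$, with only slightly weakened constants. I would prove a robustness statement to this effect, and then show that any graph meeting the weakened $m$-good hypothesis and failing the analogue of Definition~\ref{gammaspecial} in fact contains a rich scaffold inside the neighbourhood of every sufficiently-connected vertex — the existence of this scaffold is what drives the embedding of large components. Selecting $\lambda\ll (\gamma/2)^4$ and $m_0$ large enough then closes the argument.
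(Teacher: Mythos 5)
The first thing to note is that the paper does not prove Lemma~\ref{maya1} at all: it is imported verbatim from the companion paper \cite{brucemaya1} (Lemma~7.3 there), so your attempt has to stand on its own merits, and it does not. The decisive gap is the endgame of the embedding. The conclusion requires embedding $T-L$ into $G-S$, which occupies all but roughly $\lceil(\frac\gamma2)^4 m\rceil$ of the permitted vertices; hence in the final stages of your iterative procedure the set of still-free vertices has size only $O(\gamma^4 m)$, while the degree guarantee in $G$ is only $\lfloor\frac{2m}{3}\rfloor$. At that point the image of a parent vertex may have \emph{no} free neighbour whatsoever, so the step ``grow the rest of $C_i$ edge by edge, using that the residual graph retains minimum degree comfortably above half its order'' is not valid: the half-order condition holds at the start but is destroyed as soon as a constant fraction of the host is occupied, and the buffer $|L|-|S|\ge\lceil(\frac\gamma2)^4m\rceil$ is far too small to restore it. This is precisely the point at which the hypotheses (non-$\gamma$-specialness, the $\lambda m$ leaf bound, the freedom to prescribe $W$) must be converted into a global allocation or absorption scheme for the last $\Theta(m)$ tree vertices; a vertex-by-vertex greedy argument cannot embed a spanning tree under a $\frac{2m}{3}$ degree condition, and the difficulty is not confined to ``large components'' --- it would arise even if every component of $T-T^*$ were a single edge.

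Relatedly, the two ingredients you invoke for the hard part --- a robustness statement saying that deleting $O(\gamma m)$ vertices cannot create the tripartite structure of Definition~\ref{gammaspecial}, and the claim that every non-special $m$-good graph contains a ``rich scaffold'' in the neighbourhood of every suitable vertex which ``drives the embedding'' --- are exactly the content of the lemma and are only asserted, never proved; no mechanism is given for how a scaffold of paths or stars absorbs an arbitrary-shaped component when almost no free space remains. There is also a smaller but genuine circularity in the opening step: you define a good placement of $t^*$ in terms of ``the vertex set that will remain free after embedding $T^*-L$'', but the set $W$ must be chosen before the embedding of $T^*-L$ is revealed, and the hypothesis only lets you prescribe where $t^*$ lands, not where the rest of $T^*-L$ goes, so goodness cannot be certified at the moment $W$ is selected (this is likely repairable since $|V(T^*)|\le\gamma m$, but it needs an argument). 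As it stands, the proposal is a plan whose unproven steps contain the entire difficulty that \cite{brucemaya1} resolves by a structural analysis of non-$\gamma$-special graphs.
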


Our last auxiliary lemma deals with the extremal case  described in Definition~\ref{gammaspecial}. 

\begin{lemma}
\label{maya2}
There are $m_0\in\mathbb N$, $\beta\le\frac 1{10^{10}}$,   and $\gamma_0, \gamma_1\le \frac 1{50}$
such that the following holds  for all  $m\geq m_0$. \\ Suppose $G$ is a $\gamma_0$-special $(m+1)$-vertex graph of minimum degree at least~$\lfloor\frac 23m\rfloor$, and  suppose~$T$ is a tree with $m$ edges such that none of its vertices is adjacent to more than $\beta m$ leaves. Let $T^*$ be a $\gamma_1$-nice subtree of~$T$, with root~$t^*$, and let $L\subseteq V(T^*)\setminus\{t^*\}$. Assume there is a set $S\subseteq V(G)$ such that $|S|\le |L|-\lceil (\frac{\gamma_1}2)^4 m\rceil$. \\
  Assume that for any  $W\subseteq V(G)-S$ with $|W|\geq \gamma m$, there is an embedding $\phi_W$ of $T^*-Y$ into $G- S$, with $t^*$ embedded in $W$. Then there is a set $W\subseteq V(G)-S$ with $|W|\geq \gamma m$, and an embedding of $T-Y$  into $G-S$ that extends $\phi_W$. 
\end{lemma}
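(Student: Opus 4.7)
The plan is to exploit the extremal structure of a $\gamma_0$-special graph: after a standard cleaning step, the vertex set of $G$ can be repartitioned as $Y_1 \cup Y_2 \cup Y_3$ with $|Y_i| = |X_i| \pm \gamma_0^{5} m$, and such that every vertex of $Y_1$ (respectively $Y_2$) is adjacent to all but $O(\gamma_0 m)$ vertices of $Y_1 \cup Y_3$ (respectively $Y_2 \cup Y_3$). In particular, $G[Y_1 \cup Y_3]$ and $G[Y_2 \cup Y_3]$ are both essentially complete subgraphs, while edges between $Y_1$ and $Y_2$ are essentially absent. I will match this structure with a partition of $V(T)$ into three sets $T_1, T_2, T_3$ of sizes close to $|Y_1|, |Y_2|, |Y_3|$ (with small adjustments to account for the pre-embedded $T^*-L$ and the discrepancy $|L|-|S|$) such that there are no edges of $T$ between $T_1$ and $T_2$.

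The tree partition is constructed as follows. Root $T$ at $t^*$ and consider the components $C_1, \ldots, C_k$ of $T - T^*$, each attached to $t^*$ at a single vertex. Using a centroid-style subdivision applied recursively to each $C_j$, find a connected subtree $D_j \subseteq C_j$ so that $C_j - D_j$ splits into a subforest whose components can be partitioned into two groups of total sizes close to prescribed targets. Setting $T_3 := V(T^*) \cup \bigcup_j V(D_j)$, the remaining vertices are distributed between $T_1$ and $T_2$ via a greedy subset-sum argument; the assumption that no vertex of $T$ has more than $\beta m$ leaves prevents any component from being too ``lumpy,'' so the targets can be hit up to error $O(\beta m)$. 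Every edge of $T$ now lies either inside some $T_i$ or between $T_3$ and one of $T_1, T_2$.

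With both partitions in hand, I proceed with the embedding. Apply the hypothesis with $W := Y_3 \setminus S$ (of size at least $\gamma_0 m$), obtaining an embedding $\phi_0$ of $T^* - L$ with $\phi_0(t^*) \in Y_3$. Since $|V(T^*)| \leq \gamma_1 m$ is tiny compared to the $|Y_i|$'s, the image of $\phi_0$ occupies only a small fraction of $G - S$, and most of $Y_3$ remains free. Now extend $\phi_0$ vertex by vertex, following a BFS order of $T - L$ from $t^*$: each vertex of $T_i$ is mapped into a still-free vertex of $Y_i$ adjacent to the already-placed neighbor in $T$. The near-completeness of $G[Y_1 \cup Y_3]$ and $G[Y_2 \cup Y_3]$ guarantees adjacency for every edge of $T$ with one endpoint in $T_3$ and for every edge inside a single $T_i$; the only potentially problematic edges, between $T_1$ and $T_2$, do not occur by construction.

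Finally, $L$ must be absorbed. After the previous stage, $V(G) - \phi(T-L)$ has exactly $|L|$ vertices, and we need a perfect matching between $L$ and these leftover vertices respecting the tree adjacencies. The slack $|L| - |S| \geq \lceil(\frac{\gamma_1}{2})^4 m\rceil$ gives enough room for a Hall-type or iterative-matching argument, using that every vertex of $L$ has degree at most $2$ in $T$. The hardest step will be aligning the tree partition with the pre-embedded $T^*-L$: we only control where $t^*$ is mapped, so images of other $T^*$-vertices may lie in $Y_1$ or $Y_2$ and must be retrofitted into the partition; and the size targets have to be met within the narrow slack provided by $|L|-|S|$. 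Here the two flavors of $\gamma_1$-nice subtree offer complementary flexibility---type-1 subtrees let us re-route length-$5$ paths between sides, while type-2 subtrees let us reassign leaves between $Y_1, Y_2, Y_3$---which is precisely what is needed to overcome the alignment issue.
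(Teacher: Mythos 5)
There is a genuine gap, and it sits at the very first structural claim. From $\gamma_0$-speciality plus the minimum degree you may indeed conclude (after cleaning) that every vertex of $Y_1$ sees almost all of $Y_1\cup Y_3$, every vertex of $Y_2$ sees almost all of $Y_2\cup Y_3$, and every vertex of $Y_3$ sees almost all of $Y_1$ and almost all of $Y_2$ --- but \emph{nothing} forces any edges inside $Y_3$. The canonical extremal example (take $X_1,X_2$ cliques, $X_3$ an independent set completely joined to $X_1\cup X_2$) satisfies all hypotheses, and there $G[Y_1\cup Y_3]$ and $G[Y_2\cup Y_3]$ are far from complete: they are split graphs with an independent part of size about $m/3$. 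Consequently your plan of partitioning $V(T)$ into $T_1,T_2,T_3$ with sizes matching $|Y_1|,|Y_2|,|Y_3|$ and only the constraint ``no $T$-edges between $T_1$ and $T_2$'' cannot work as stated: any edge of $T$ with both endpoints in $T_3$ would have to be embedded inside $Y_3$, where there may be no edges at all. What is actually required is that the roughly $m/3$ vertices of $T$ sent to $X_3$ form an \emph{independent set} of $T$, and this is the real difficulty of the extremal case. It forces a per-component analysis: each component hanging off the pre-embedded part contributes (at best) one of its two bipartition classes to $X_3$, and components isomorphic to $3$-vertex paths have bipartition ratio $2:1$ and are further constrained because their root must avoid $X_3$ (its parent sits there), so they can contribute at most one vertex in three. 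The paper's proof is built precisely around this obstruction --- it cuts $T-(T^*-t^*)$ with a small separator $Z$, groups the components into two balanced families, and then runs a three-case analysis on how much of $T$ consists of such $3$-vertex-path components, filling $X_3$ with one bipartition class per component and routing the shortfall through $X_2$. Your centroid-plus-subset-sum partition and BFS extension contain no mechanism for producing a large independent set of $T$ to occupy $X_3$, nor for handling the $2:1$ components, so the embedding step fails in the extremal example above.

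Two smaller points. First, the lemma only asks for an embedding of $T-L$ extending the given embedding of $T^*-L$; your final ``absorption of $L$'' stage (and the claim that exactly $|L|$ vertices remain uncovered) is not part of this statement --- the slack $|L|-|S|\ge\lceil(\frac{\gamma_1}{2})^4 m\rceil$ is there so that the embedding of $T-L$ need not be exactly spanning, and the matching of $L$ is handled elsewhere (in Lemma~\ref{bruce}). Second, the closing appeal to ``type-1 versus type-2 flexibility'' of the $\gamma_1$-nice subtree is not available here: $T^*-L$ is embedded by hypothesis with only the location of $t^*$ under your control, so re-routing paths or reassigning leaves of $T^*$ between the $Y_i$ is not something this lemma lets you do.
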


We prove Lemma~\ref{maya2} in Section~\ref{ma2}.

\subsection{Proving Lemma~\ref{theotherlemma}}\label{proofofthelemma}
  We now show how our four auxiliary lemmas imply Lemma~\ref{theotherlemma}.
 
\begin{proof}[Proof of Lemma~\ref{theotherlemma}]
First, we apply Lemma~\ref{maya2} to obtain four numbers  $\beta, \gamma_0, \gamma_1>0$ and $m_0^{Lem~\ref{maya2}}\in\mathbb N$. Next, we apply  Lemma~\ref{bruce} %twice, once for each input $\gamma_0, \gamma_1$ 
to obtain a number $m_0^{Lem~\ref{bruce}}$. Finally, we apply Lemma~\ref{maya1} with input $\gamma_0$ to obtain another integer~$m_0^{Lem~\ref{maya1}}$ as well as a number $\lambda >0$.

For the output of Lemma~\ref{theotherlemma}, we will take $$m_1:=\max\{m_0^{Lem~\ref{maya2}}, m_0^{Lem~\ref{bruce}}, m_0^{Lem~\ref{maya1}}, \frac{200}{\gamma_0}\},$$ and  $$\delta:=\min\{\beta, \lambda, 10^{-23}\}.$$ 

Now, consider an $m$-good graph~$G$, and a tree $T$ with $m$ edges as in the statement of Lemma~\ref{theotherlemma}. 
Use Lemma~\ref{cutthetree} together with Lemma~\ref{bruce}, once for each input $\gamma_0$, $\gamma_1$,  to obtain, for $i=0,1$, a $\gamma_i$-nice tree $T^*_i$ with root $t^*_i$, and sets $S_i$, $L_i$ satifying $$|S_i|\  \le \ |L_i|-(\frac{\gamma_i}2)^4m.$$ 

Moreover, for $i=0,1$, there are embeddings of $T^*_i-L_i$ into $G-S_i$ that map the vertex $t^*_i$ to any given vertex, except possibly the universal vertex of $G$. 
Furthermore, Lemma~\ref{bruce} guarantees that, in order to embed $T$ into $G$, we only need to extend, for either $i=0$ or $i=1$, the  embedding of $T^*_i-L_i$ given by the lemma  to an embedding of all of $T-L_i$ into $G-S$.

For this, we will use Lemmas~\ref{maya1} and~\ref{maya2}.
More precisely, if $G$ is not $\gamma_0$-special, then we can apply Lemma~\ref{maya1} to $G$ with sets $S_0$ and $L_0$, together with the tree $T^*_0$. 
If $G$ is $\gamma_0$-special,  we can apply Lemma~\ref{maya2} to $G$ with sets $S_1$ and~$L_1$, together with the tree $T^*_1$. 
 This finishes the proof of the lemma. 
\end{proof}

\subsection{Proof of Lemma~\ref{cutthetree}}\label{gammalemma}

We finish Section~\ref{proofofthel} by  
  giving the short proof of Lemma~\ref{cutthetree}. 

\begin{proof}[Proof of Lemma~\ref{cutthetree}]
As an auxiliary measure, we momentarily fix any leaf~$v_L$ of the given tree~$T$ as the root of $T$. 
Next, we choose a vertex $t^*$ in  $T$ having at least  $\lceil\frac{\gamma m}{2}\rceil$ descendants, such that it is furthest from  $v_L$ having this property.

Then, each component  of $T-t^*$ that does not contain $v_L$ has size at most~$\lceil\frac{\gamma m}2\rceil$.
 So, there is a subset $S^*$ of these  components  such that 
$$\Big\lceil\frac{\gamma m}2\Big\rceil\ \leq \ \sum_{S\in S^*}|S|\ \leq \ \gamma m.$$

Now, consider the tree $T^*$ formed by the union of the trees in $S^*$ and the vertex~$t^*$. Clearly, $T^*$  fulfills items (i) and~(ii) of  Definition~\ref{gamma-nice}.
If~$T^*$ contains at least $\lceil\frac{\gamma m}{40}\rceil$ leaves of $T$, then $T^*$ is $\gamma$-nice  of type 2, and we are done. 

Otherwise,  $T^*$ has at most $\lfloor\frac{\gamma m}{40}\rfloor$ leaves, and a standard calculation shows that $T^*$ has at most $\lfloor\frac{\gamma m}{40}\rfloor$ vertices of degree at least $3$. Delete these vertices from $T^*$. It is easy to see that this 
leaves us with a set of at most $\frac{\gamma m}{20}$ paths, together containing at least $\frac{19}{40}\gamma m$ vertices. All  vertices of these paths have degree at most $2$ in~$T$. 
Deleting at most four vertices on each path we can ensure all paths have lengths  divisible by five, and together contain at least $\frac{19}{40}\gamma m-4\cdot \frac{\gamma m}{20} \ \ge \ \frac{\gamma m}{4} +5$ vertices. Dividing each of the paths into paths of length five we obtain  a set ${\cal P}$  of at least~$\lceil\frac{\gamma m}{20}\rceil$ disjoint 
paths in~$T^*$. So, $T^*$ is $\gamma$-nice of type~1. 
\end{proof}

\section{The proof of Lemma \ref{easybruce}}\label{br1}

This section is devoted to the proof of the following lemma, which proves Lemma~\ref{bruce}  for all $\gamma$-nice trees of type~1.

\begin{lemma} 
\label{easybruce}
 There is an $m_0\in\mathbb N$ such that the following holds for all $m \ge m_0$, and for all $\gamma>0$ with $\frac 2{10^{7}}\le\gamma<\frac 1{30}$. \\
Let $G$ be an $m$-good graph. Let $T$ be a tree with $m$ edges, such that no vertex of $T$ is  adjacent to more than $\frac m{10^{23}}$ leaves. Let $T$ have a  $\gamma$-nice
 subtree $T^*$ of type 1, with root $t^*$. \\
 Then there are sets $L\subseteq V(T^*)\setminus\{t^*\}$ and $S\subseteq V(G)$  satisfying
$|S| \leq |L|-(\frac{\gamma}2)^4 m$. Furthermore, for any $w\in V(G)-S$, there is  an embedding of $T^*-L$ into  $G-S$, with $t^*$ embedded in $w$, such that any embedding of $T-L$ into $G-S$ extending our embedding of $T^*-L$  can be extended to an embedding of all of $T$ into $G$.
\end{lemma}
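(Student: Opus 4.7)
\medskip

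\noindent\textit{Proof plan.} The strategy is the absorption method, exploiting the $k:=\lceil\gamma m/20\rceil$ pairwise vertex-disjoint length-$5$ paths $P_j=v_0^jv_1^jv_2^jv_3^jv_4^jv_5^j$ provided by $T^*$ being $\gamma$-nice of type~1.  Every vertex on these paths has $T$-degree at most~$2$, and by Definition~\ref{gamma-nice}(ii) combined with $T$ being a tree, only $t^*$ can have $T$-neighbours outside $T^*$ (otherwise, two different vertices of $T^*$ would have neighbours in the same component of $T-T^*$, creating a cycle). Hence both $T$-neighbours of every internal path vertex lie in $V(T^*)$. Take $L:=\{v_3^j:1\le j\le\ell\}\subseteq V(T^*)\setminus\{t^*\}$ for some $\ell$ slightly larger than $\lceil(\gamma/2)^4 m\rceil$; once $T^*-L$ is embedded, the images of the two $T$-neighbours $v_2^j,v_4^j$ of each $v_3^j\in L$ are determined, and absorbing $v_3^j$ reduces to picking an unused common neighbour of those two images in $G$.

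Set $s:=\ell-\lceil(\gamma/2)^4 m\rceil$. Using $\delta(G)\ge\lfloor 2m/3\rfloor$ (which yields at least $\lfloor m/3\rfloor$ common neighbours for every pair of vertices), greedily construct $s$ pairwise vertex-disjoint length-$2$ paths $a_jx_jb_j$ in $G$ (for $j\le s$) together with $\ell-s$ further disjoint ordered pairs $(a_j,b_j)$ (for $j>s$), all inside $V(G)$.  Set $S:=\{x_j:j\le s\}$, so $|S|=s$ and $a_j,b_j\in V(G)\setminus S$ for every $j$.  The pairs $(a_j,b_j)$ for $j>s$ are chosen with care (for instance, inside a dense subgraph of $G$, possibly using the universal vertex of $G$) so that their common neighbourhoods overlap enough to support the leftover-matching step below.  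The total of $O(\gamma^4 m)$ reserved vertices is a tiny fraction of $|V(G)|$, so the greedy construction goes through.

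Given any $w\in V(G)\setminus S$, embed $T^*-L$ into $G-S$ subject to $t^*\mapsto w$, $v_2^j\mapsto a_j$ and $v_4^j\mapsto b_j$ for every~$j$.  Since $|V(T^*)|\le\gamma m\ll m$, $G-S$ has minimum degree at least $\lfloor 2m/3\rfloor-s\ge m/2$, and the $m/10^{23}$ leaf-degree hypothesis on $T$ avoids local congestion at high-degree vertices of $T^*$, a BFS-style greedy extension starting from the prescribed images completes the embedding. Given any extension $\phi'$ of this embedding to $T-L$ in $G-S$: for $j\le s$ define $\phi''(v_3^j):=x_j\in S$, which respects adjacency since $x_j\in N_G(a_j)\cap N_G(b_j)$; for $j>s$ match the corresponding $v_3^j$'s bijectively to the leftover set $V(G)\setminus S\setminus\phi'(V(T-L))$ (which has size exactly $\ell-s$) using Hall's theorem on the bipartite graph of valid common-neighbour placements.

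The main obstacle is twofold: (i) carrying out the rigid-constraint embedding of $T^*-L$ uniformly in $w$, with $t^*\mapsto w$ and all the constraints $v_2^j\mapsto a_j,\ v_4^j\mapsto b_j$ imposed simultaneously; and (ii) ensuring the leftover-absorption step succeeds for \emph{every} extension $\phi'$, since the leftover set is determined by $\phi'$.  For (i) the flexibility comes from the four unused vertices of each $P_j$ surrounding each prescribed pair, the smallness of $T^*$ relative to $G$, and the leaf-degree hypothesis on $T$.  For (ii) we exploit the high minimum degree of $G$ together with the careful choice of the pairs $(a_j,b_j)$ for $j>s$ to guarantee robust common-neighbour coverage regardless of how $\phi'$ chooses the leftover; verifying Hall's condition against an adversarial $\phi'$ is the technical heart of the argument.
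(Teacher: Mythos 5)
There is a genuine gap, and it sits exactly where you locate "the technical heart": the absorption step against an adversarial leftover. In your construction $L$ has size only slightly above $\lceil(\gamma/2)^4m\rceil$, the pairs $(a_j,b_j)$ are fixed once and for all, and for $j>s$ you must match the flexible vertices $v_3^j$ bijectively onto the leftover set $R=V(G)\setminus(S\cup\phi'(V(T-L)))$, every vertex of which must be used because the embedding is spanning. But $R$ is essentially arbitrary: the lemma quantifies over \emph{all} extensions $\phi'$, so $\phi'$ may leave unused any vertex $r$ outside $S\cup\phi(T^*-L)$. Since $\delta(G)\ge\lfloor 2m/3\rfloor$ only, a vertex $r$ can have up to about $m/3$ non-neighbours, while you have only $\ell=O(\gamma^4 m)$ disjoint pairs; hence nothing prevents $r$ from missing at least one endpoint of \emph{every} pair $(a_j,b_j)$, in which case $r$ lies in no common neighbourhood $N(a_j)\cap N(b_j)$ and Hall's condition fails outright (take $Y=\{r\}$). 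The parenthetical remedy "chosen with care, inside a dense subgraph, possibly using the universal vertex" cannot repair this: there is only one universal vertex, all other degrees may be as small as $\lfloor 2m/3\rfloor$, and no deterministic choice of $O(\gamma^4m)$ fixed pairs can make every potential leftover vertex a common neighbour of enough pairs, let alone verify Hall for all subsets of an adversarial $R$. So the step you flag as (ii) is not merely unproved; as set up, it is false for natural host graphs.

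For contrast, the paper avoids this by making the absorbing structure both \emph{large} and \emph{random}: it takes $L$ to be the fourth vertex of each of the $\lceil\gamma m/20\rceil$ paths (so $|L|\ge m/10^8$, far larger than the deficiency $\lceil(\gamma/2)^4m\rceil$), chooses $S$ and the set $N$ of images of the neighbours of $L$ uniformly at random, and embeds $T^*-L$ around these random images. Chernoff bounds then give that every vertex of $G$ is a common neighbour of roughly $(2/3)^2|L|$ pairs $(n_1(x),n_2(x))$ and every $x\in L$ has about $|S|/3$ candidate images in $S$; a further union-bound argument over all small candidate obstruction sets (the set $S^*$ and the families $\mathcal Q',\mathcal Q''$ in the paper) rules out any Hall obstruction simultaneously for every possible leftover $R$. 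If you want to salvage your cherry-based picture, you would have to let the "pairs" be random and let $|L|$ be linear in $m$ with a much larger constant than the deficiency, at which point you are essentially reconstructing the paper's Claim~\ref{fewleavescanfinishoff}.
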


In the proof of Lemma~\ref{easybruce}, some random choices are going to be made, and in order to see we are not far from the expected outcome, it will be useful to have the well-known Chernoff bounds at hand (see for instance~\cite{McD89}). For the reader's convenience let us state these bounds here.

Let $X_1, \ldots , X_n$ be independent random variables satisfying $0 \leq X_i \leq 1$. Let $X = X_1 + \ldots + X_n$ and set $\mu:=\mathbb E[X]$. Then for any $\eps\in (0,1)$, it holds that
\begin{equation}\label{chernoff} \mathbb P [X \geq(1+\eps)\mu] \leq e^{-\frac{\eps^2}{2+\eps} \mu}\text{ \ \ \ and \ \ \
}\mathbb P [X \leq(1-\eps)\mu] \leq e^{-\frac{\eps^2}{2} \mu}.
\end{equation}

%We may need another lemma here:
%\begin{lemma}
%
%\end{lemma}

%\smallskip

We are now ready for the proof of Lemma~\ref{easybruce}.

\begin{proof}[Proof of Lemma~\ref{easybruce}]
We choose $m_0=10^{25}$. Now assume that for some $m\geq m_0$, we are given an $m$-good graph $G$,  and a tree $T$ with $m$ edges such that none of its vertices is adjacent to  more than $10^{-23} m$ leaves. We are also given a $\gamma$-nice subtree $T^*$ of $T$, with root $t^*$, and a set~$\mathcal P$ of disjoint paths of length five such that $$|\mathcal P|=\lceil\frac{\gamma m}{20}\rceil,$$ for some $\gamma$ as in the lemma. %We will work with a subset $\mathcal P$ of $\mathcal P^*$  of size $$|\mathcal P|=\lceil\frac m{10^8}\rceil.$$

We now define
 $L$ as the set that consists of the fourth vertex (counting from the vertex closest to $t^*$) of each of the paths from $\mathcal P$.  Clearly,
\begin{equation}\label{sizeofL}
|L|=\lceil\frac{\gamma m}{20}\rceil \ge \lceil\frac m{10^8}\rceil,
\end{equation}
by our assumptions on $\gamma$.

In order to prove Lemma~\ref{easybruce}, we need to do three things. First of all, we need to find a set $S\subseteq V(G)$ of size at most $|L|-(\frac{\gamma}2)^4 m$. Then, given any  vertex  $w\in V(G)-S$, we have to embed $T^*-L$ into $G-S$, with $t^*$ going to~$w$. Finally, we need to make sure that any extension of this embedding to an embedding of all of $T-L$ into $G-S$ can be completed to an embedding of all of $T$.

It is clear that for the last point to go through, it will be crucial to have chosen both $S$ and the set $N$ of the images of the neighbours of the vertices in~$L$ carefully, in order to have the necessary connections between $N$ and $S$. Our solution is to choose both $S$ and $N$ randomly. More precisely, choose a set $S$ of size 
\begin{equation}\label{sizeofS}
|S|= |L|-\lceil(\frac{\gamma}2)^4 m\rceil
\end{equation}
 uniformly and independently at random in $V(G-w)$.  Also,  choose a set $N$ of size
\begin{equation}\label{sizeofN} 
|N|=2|L|
\end{equation}
 uniformly and independently at random in $V(G-w-S)$. 

\medskip

Now, we can proceed to embed $T':=T^*-L$ into $G-S$. We will start by embedding the neighbours of vertices in $L$ arbitrarily into $N$. Let us keep track of these by calling $n_1(x)$ and $n_2(x)$ the images of the neighbours of $x$, for each $x\in L$. 

Next, we embed $t^*$  into~$w$, and then proceed greedily,  using a breadth-first order on $T^*$ (skipping the vertices of $L$ and those already embedded into $N$). Each vertex we embed has at most two neighbours that have been embedded earlier (usually this is just the parent, but parents of vertices embedded into~$N$ have two such neighbours, and the root of $T'$ has none). So, since~$G$ has minimum degree  at least $\lfloor\frac{2m}3\rfloor$ and given the small size of $T'$, we can easily embed all of $T'$ as planned.

\medskip

It remains to prove that any extensions of this embedding can be completed to an embedding of all of $T$. This will be achieved by the following claim, which finishes the proof of Lemma~\ref{easybruce}.

\begin{claim}
\label{fewleavescanfinishoff}
For any set $R\subseteq V(G)$ of $|L|-|S|$ vertices, there is a bijection between~$L$ 
and $S \cup R$ mapping each vertex  $x\in L$ to a common neighbour of $n_1(x)$ and~$n_2(x)$. 
\end{claim}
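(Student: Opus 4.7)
The plan is to reduce the claim to a Hall's-theorem verification for the bipartite graph $B$ with parts $L$ and $S\cup R$, whose edges are pairs $(x,v)$ with $v\in C_x:=N_G(n_1(x))\cap N_G(n_2(x))$; the desired bijection will arise as a perfect matching. Writing $N^*(L'):=\bigcup_{x\in L'}C_x$ for $L'\subseteq L$, a short min-max analysis over the adversary's choice of $R$ (who pushes $R$ into $V(G)\setminus(S\cup N^*(L'))$ to block the matching for as long as possible) shows that a perfect matching exists for every admissible $R$ if and only if, for every $L'\subseteq L$, at least one of
\[
|S\cap N^*(L')|\ge|L'|\qquad\text{or}\qquad|V(G)\setminus N^*(L')|\le|L|-|L'|
\]
holds. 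The goal is then to show that the random $S$ and $N$ chosen in the proof of Lemma~\ref{easybruce} satisfy this condition for every $L'$, with positive probability.

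The deterministic bound $|C_x|\ge 2\lfloor 2m/3\rfloor-(m-1)\ge m/3-2$ follows from the minimum degree. For the small regime $|L'|\le|S|/4$, I would apply the Chernoff bound~\eqref{chernoff} to the hypergeometric random variable $|C_x\cap S|$ (whose mean is at least $|S|/3$) and union-bound over the $|L|$ vertices of $L$ to conclude that, with high probability, $|C_x\cap S|\ge|S|/4$ for every $x\in L$ simultaneously. Then for any $L'\subseteq L$ with $|L'|\le|S|/4$ and any $x\in L'$, $|S\cap N^*(L')|\ge|S\cap C_x|\ge|S|/4\ge|L'|$, establishing the first disjunct.

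The main work lies in the large regime $|L'|>|S|/4$, where the second disjunct is needed. Revealing the pairs $(n_1(x),n_2(x))$ sequentially in any fixed order on $L'$, a short calculation using $\deg_G(v)\ge\lfloor 2m/3\rfloor$ and the fact that the residual pool in $V(G-w-S)$ still contains $(1-o(1))m$ vertices gives $\mathbb P[v\in C_x\mid\text{earlier picks}]\ge 4/9-o(1)$ for every $v$. Iterating yields $\mathbb P[v\notin N^*(L')]\le(5/9+o(1))^{|L'|}$, and summing over $v$ gives $\mathbb E|V(G)\setminus N^*(L')|\le m(5/9)^{|L'|}$, which is super-exponentially small in $|L|$ once $|L'|>|S|/4$ (a linear-in-$m$ threshold). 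A Chernoff-type upper tail of the form $\mathbb P[X>t]\le(e\mu/t)^t$, combined with a standard coupling to independent sampling to handle the weak dependencies between the indicators $\mathbf{1}[v\notin N^*(L')]$, yields a per-$L'$ failure probability far below $2^{-2|L|}$.

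The main obstacle is precisely the accompanying union bound over all $L'\subseteq L$ in the large regime: a priori there are up to $2^{|L|}$ such subsets, and the binomial coefficient $\binom{|L|}{|L'|}\le 2^{|L|}$ must be beaten by the per-$L'$ concentration. This works because $(5/9)^{|L'|}$ is super-exponentially smaller than $2^{-|L|}$ whenever $|L'|$ is a positive fraction of $m$, which holds throughout the large regime. Fixing $(S,N)$ in the resulting good event and invoking Hall's theorem then produces the required bijection between $L$ and $S\cup R$ for every $R$, completing the proof of Claim~\ref{fewleavescanfinishoff} and therefore of Lemma~\ref{easybruce}.
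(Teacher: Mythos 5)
Your reduction to the two-case Hall condition and your treatment of the small regime (Chernoff for $|C_x\cap S|$, union bound over $x\in L$) are fine, and correspond roughly to the paper's condition~\eqref{likelyB}. The proof breaks in the large regime, and the failure is not repairable by a better tail inequality: the indicators $\mathbf{1}[v\notin N^*(L')]$ are not weakly dependent, and the event you are trying to union-bound is in fact false with high probability for some host graphs. Take $G$ close to the $\gamma_0$-special extremal example: $X_1,X_2,X_3$ of size about $m/3$, with $X_1,X_2$ cliques, no $X_1$--$X_2$ edges, and $X_3$ complete to everything. For the (randomness-dependent) set $L':=\{x\in L:\{n_1(x),n_2(x)\}\not\subseteq X_1\cup X_3\}$ one has $C_x\subseteq X_2\cup X_3$ for every $x\in L'$, so $V(G)\setminus N^*(L')\supseteq X_1$ has about $m/3$ vertices, while $|L'|\approx\frac59|L|>|S|/4$ and $|L|-|L'|\approx\frac49|L|\le\frac{m}{45}$. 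So with probability close to $1$ there is a large $L'$ for which your second disjunct fails badly; Hall's condition holds there only through the $S$-side bound $|S\cap N^*(L')|\ge|L'|$ (here $\approx\frac23|S|$ versus $\approx\frac59|L|$), which your case split never attempts for large $L'$. Technically, this same example shows that your claimed bound $\mathbb{P}[X>t]\le(e\mu/t)^t$ cannot follow from any ``standard coupling to independent sampling'': for a fixed $L'$ the events $\{v\notin N^*(L')\}$, $v\in X_1$, are essentially one and the same event (no pair of $L'$ lands in $X_1\cup X_3$), so the true tail at $t\approx m/3$ is only $e^{-\Theta(|L'|)}$, not doubly small, and $e^{-\Theta(|L'|)}$ does not beat the $\binom{|L|}{|L'|}$-term of the union bound when $|L'|$ is a moderate constant fraction of $|L|$ — consistently with the fact that the adaptively chosen bad $L'$ above exists with probability near $1$.

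This is precisely why the paper's argument is structured differently: instead of a union bound over the $2^{|L|}$ subsets of $L$, it assumes a Hall violator $(L',L'',J',J'')$, uses the degree conditions \eqref{likelyA} and \eqref{likelyB} to pin down its size, and then compresses the violator into a greedily chosen set $S^*\subseteq S\cap J''$ of size $O(\log m)$, so that the probabilistic union bound runs only over $m^{O(\log m)}$ candidate sets $Q$ (the families $\mathcal Q'$, $\mathcal Q''$, $\mathcal Q''_+$). The near-extremal structure that defeats your approach is then handled deterministically: the conclusion $S^*\notin\mathcal Q''_+$ forces all of $S\cap J''$ into the neighbourhood of a single vertex $v_0$ of degree less than $\frac{2m}{3}+\frac m{100}$, and a counting argument on a set $Z\subseteq V(G)-w-N(v_0)$ contradicts the typical degree of vertices into the random set $S$. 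If you want to salvage your route, you would have to prove, for every large $L'$, the alternative bound $|S\cap N^*(L')|\ge|L'|$ in the correlated (special-graph) situations, which essentially recreates the paper's structural analysis.
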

%
%and:
%
%\begin{lemma}
%\label{canpartition}
%Letting $U_1,...U_p$ be the non-singleton components of $T-E(T^*)$, we can partition 
%$G-V(H) -S$ into $p$ parts $S_1,...,S_p$ such that 
%each $S_i$ has between  $|U_i|(1+\frac{\gamma}{2})$ and $|U_i|(1+\frac{3\gamma}{2})$ vertices 
%and every vertex of $G$ has  at least $\frac{13|U_i|}{20}$ neighbours in $U_i$. 
%\end{lemma}
%
%Proof of Lemma \ref{canpartition}: Just randomly assign each vertex of $G-V(H)-S$ to 
%each $S_i$ with probability proportional to the size of $|U_i|$ and apply Chernoff. 

In order to prove Claim~\ref{fewleavescanfinishoff}, we 
define an auxiliary bipartite graph $H$ having $V(G-w)$ on one side, and $L$ on the other. We put an edge between $v\in V(G-w)$ and $x\in L$ if $v$ is adjacent to both $n_1(x)$ and $n_2(x)$. We are interested in the subgraph $H'$ of $H$ that is obtained by restricting the $V(G-w)$-side to the set $S\cup R$ (but sometimes it is enough to consider degrees in $H$).

 By the minimum degree condition on $G$, the expectation of the degree in~$H$ of any vertex  $v\in V(G-w)$ is $$\mathbb E(deg_H(v))\geq(\frac{199}{300})^2|L|,$$
 since $v$ has at least $\lfloor\frac{2}{3}m-1\rfloor\geq \frac{199}{300}m$ neighbours in $G-w$, and thus, for any given $x\in L$, each $n_i(x)$ is adjacent to $v$ with probability at least $\frac{199}{300}m$.
 Therefore, the
 probability that all vertices of $G$ have degree at least $$d:=(\frac{198}{300})^2|L|$$ is bounded from below by
 \begin{align}
 \notag \mathbb P[\delta(G)\geq d]\ & \geq \ 1-\sum_{v\in V(G-w)} \mathbb P[deg_H(v)< d] \\  \notag & \geq \ 1- (m+1)\cdot e^{-(\frac{397}{199\cdot 300})^2\frac {|L|}{2}}\\ \notag & \geq \ 0.9999,
 \end{align}
 where  we used~\eqref{chernoff} (Chernoff's bound) with $\varepsilon=\frac{199^2-198^2}{199^2}=\frac{397}{199^2}$, our bound on the size of $L$ as given in~\eqref{sizeofL} and the fact that $m\geq 10^{25}$.
 
Furthermore, since $G$ has minimum degree at least~$\lfloor \frac 23m\rfloor$, we know that for each $x\in L$, vertices $n_1(x)$ and $n_2(x)$ have at least $\frac 13m-3$ common neighbours in $G-w$. Therefore, every vertex of $L$ has degree at least $\frac{1}{3}m-3$ in~$H$. However, we are interested in the degree of these vertices  into the set~$S$. For a bound on this degree, first note that the expected degree of any vertex of $L$ into the set~$S$ is bounded from below by $\frac{999}{3000}|S|$. Now again apply~\eqref{chernoff}  (Chernoff's bound), together with the fact that $|S|\geq 10^{17}$, to obtain that with probability greater than $0.9999$, every element  of $L$ is incident to 
at least $\frac{998}{3000}|S|$ vertices of $S$. 

Resumingly, we can say that with probability greater than $0.999$ we chose the sets $S$ and $N$ such that the resulting graph $H$ obeys the following degree conditions:
\begin{enumerate}[(A)]
\item\label{likelyA} the minimum degree of $V(G-w)$ into $L$ is at least $(\frac{198}{300})^2|L|$; and
\item\label{likelyB} the minimum degree of $L$ into $S$ is at least $\frac{998}{3000}|S|$.
\end{enumerate}

Let us from now on assume that we are in the likely situation that both~\eqref{likelyA} and~\eqref{likelyB} hold. 

Further, assume 
 there is no matching from $S \cup R$ to $L$ in $H'$.  Then by Hall's theorem\footnote{Hall's theorem can be found in any standard textbook, it states that a bipartite graph with bipartition classes $A$ and $B$ either has a matching covering all of $A$, or there is an `obstruction': a set $A'\subseteq A$ such that $|N(A')|<|A'|$.}, there is a partition of $L$ into sets $L'$ and $L''$ and a partition of $S \cup R$ into sets $J'$ and $J''$ such that
there are no edges from $L'$ to $J''$, and such that $$|J'|<|L'|\text{ \  and \ }|L''|<|J''|.$$ 

Since $J''\neq \emptyset$, and since by~\eqref{likelyA}, each vertex in $J''$ has degree at least $(\frac{198}{300})^2|L|$ into~$L$, and thus into $L''$, we deduce that
\begin{equation}\label{J''}
|J''|> |L''|\geq(\frac{198}{300})^2|L|.
\end{equation}

Since also $L'\neq \emptyset$, and by~\eqref{likelyB}, each of its elements has at least $\frac{998}{3000}|S|$ neighbours in $S\cap J'$, we see that 
$$|L'|>|J'|\geq \frac{998}{3000}|S|.$$ Thus, using~\eqref{sizeofL} and~\eqref{sizeofS}, as well as our upper bound on $\gamma$, we can calculate that
\begin{equation}\label{sizeofL''}
|L''| \ =\ |L|-|L'|\ \leq \ |S|+\lceil (\frac{\gamma}{2})^4 m \rceil -\frac{998}{3000}|S| \ \leq \ \frac{2003}{3000}|S|.
\end{equation}

Let us iteratively define a subset $S^*$ of $S \cap J''$  as follows. We start by putting an arbitrary  vertex $v_0\in S \cap J''$ into $S^*$, and while there is a vertex of $S \cap J''$ whose neighbourhood contains 
$\frac{m}{1000 \log m}$ vertices which are not in the neighbourhood of $S^*$, we augment  $S^*$ by adding any  such vertex $v$ that
maximises $N(v)-N(S^*)$. We stop when there is no suitable vertex that can be added to $S^*$. 
Note that  $|S^*|\leq 1000 \log m$. 

Our plan is to show next that the set $S^*$ has certain properties which are unlikely to be had by {\it any} set having certain other properties that $S^*$ has (for instance, having size at most $1000 \log m$). More precisely, the probability that a set like $S^*$ exists will be bounded from above by $0.005$. This will finish the proof of Claim~\ref{fewleavescanfinishoff}, as we then know that with probability at least $0.99$ we chose sets~$S$ and $N$ such that in the resulting graph $H'$, the desired matching  exists, and thus Claim~\ref{fewleavescanfinishoff} holds.

So, let us define $\mathcal Q$ as the set of all subsets of $V(G-w)$ having size at most $1000 \log m$. For each $Q\in \mathcal Q$, let $V_1(Q)$ be the set consisting of all 
vertices of~$G-w$ which have less than $\frac{m}{1000 \log m}$ neighbours outside $N(Q)$ (in the graph~$G-w$). 

Finally, let $\mathcal Q'\subseteq \mathcal Q$ contain all $Q\in\mathcal Q$ for which
\begin{equation}\label{V1Q}
 \frac{m}{10^9}\ \leq \ |V_1(Q)|\ \leq \ \frac m3 + \frac m{\log m}+2.
\end{equation}
Observe that, for $Q\in\mathcal Q'$ fixed, the expected size of $V_1(Q)\cap S$ is $$\mathbb E[V_1(Q)\cap S]=|V_1(Q)|\cdot \frac{|S|}{m}$$ because $S$ was chosen at random in $G-v$. So by~\eqref{sizeofS} and~\eqref{sizeofL}, and by~\eqref{V1Q},  we see that 
 \begin{equation}\label{V1QcapS}
 \frac 12\cdot \frac{m}{10^{17}} \ \leq \ \mathbb E[V_1(Q)\cap S]\ \leq \ \frac{|S|}3 + \frac{|S|}{\log m} +2 \ \leq \ \frac {38}{100}|S|,
\end{equation}
where the last inequality follows from the fact that $m \geq 10^{25}$.
 Now, we can use~\eqref{chernoff} (Chernoff's bound) and the first inequality of~\eqref{V1QcapS} to bound the probability that
$|V_1(Q)\cap S|$ exceeds its expectation by a factor of at least $\frac{20}{19}$ as follows: $$\mathbb P\Big [|V_1(Q)\cap S|\geq \frac{20}{19} \cdot\mathbb E[V_1(Q)\cap S] \Big] \ \leq \ e^{-\frac{\mathbb E[V_1(Q)\cap S]}{820}}\ \leq \ e^{-\frac{m}{164\cdot 10^{18}}}\ \leq \ \frac{0.001}{m^{\log m}}.$$
Since by~\eqref{V1QcapS}, we know that $$\frac{20}{19}\cdot \mathbb E[V_1(Q)\cap S]\  < \ \frac{41}{100}|S|,$$ and since $|Q|\leq m^{\log m}$ for each $Q\in\mathcal Q$, we can 
deduce that

\begin{equation}\label{unlikely}
\mathbb P\Big [ \ \exists Q\in\mathcal Q' \text{ with } |V_1(Q)\cap S|\geq \frac{41}{100} |S| \ \Big ] \ \leq \ 0.001.
\end{equation}

\smallskip

Now, let us turn back to the set $S^*$. First of all, we note that by the definition of~$S^*$,  we have $S \cap J''\subseteq V_1(S^*)$. Thus, we can use~\eqref{J''} and~\eqref{sizeofS} to deduce that
\begin{align}\label{S^*1}
|V_1(S^*)\cap S|\ & \geq \ |J''|-|R|\notag \\ & \ \geq \ (\frac{198}{300})^2|L| -\lceil(\frac{\gamma}2)^4 m\rceil\notag \\ &\ \geq \ (\frac{197}{300})^2|S|\notag \\ &\ > \ \frac{43}{100}|S|.
\end{align} 
So, by~\eqref{sizeofL} and~\eqref{sizeofS},  the first inequality of~\eqref{V1Q} holds for $Q=S^*$.

\smallskip

For a moment, assume that $N(S^*)\leq \frac{999}{1000}m$. Then, also the second inequality of~\eqref{V1Q} holds for $Q=S^*$, as otherwise, each of the at least $\frac m{1000}$ vertices of $V(G-w)\setminus N(S^*)$ sees at least $\frac m{\log m}$ vertices of $V_1(S^*)$, and so, by the definition of~$S^*$, we have that
\begin{align*} \frac{m}{1000} \cdot \frac m{\log m}\ &\ \leq e(V_1(S^*),V(G-w)\setminus N(S^*))\\ &< \frac m{1000\log m} \cdot |V_1(S^*)|\\ & \leq \frac{m^2}{1000\log m},
\end{align*} 
a contradiction.  Hence $S^*\in\mathcal Q'$. But then, according to~\eqref{unlikely}, we know that~\eqref{S^*1}
is not likely to happen. So, with probability at least $0.998$, we  chose $S$ in a way that all three of~\eqref{likelyA}, \eqref{likelyB},  and
\begin{enumerate}[(A)]
\addtocounter{enumi}{2}
\item \label{N(S^*1)big}\label{likelyC} $|N(S^*)|\geq \frac{999}{1000}m$
  \end{enumerate} 
 hold. We will from now on assume that we are in this likely case.
 
 \smallskip
  
Consider the set $\mathcal Q''$ which consists of all sets $Q\in\mathcal Q$ for which the first inequality in~\eqref{V1Q} holds, and for which $|N(Q)|\geq \frac{999}{1000}m$. By~\eqref{S^*1} and by~\eqref{N(S^*1)big}, $S^*\in \mathcal Q''$.
  
Call $\mathcal Q''_+$ the set of all $Q\in\mathcal Q''$ for which at least one of the following holds:
\begin{itemize}
\item $Q$ has a vertex of degree at least $\frac{2m}3+\frac m{100}$; or
\item $Q$ has two vertices $v, v'$ such that each sees at least $\frac m{100}$ vertices outside the neighbourhood of the other one.
\end{itemize}
 We are going to show that the sets $Q\in \mathcal Q''_+$ typically have larger neighbourhoods in $L$ than $S^*$ has, and will thus be able to conclude that $S^*\notin\mathcal Q''_+$, which will be crucial for the very last part of the proof.
  
  For this, let $X(Q)$ be the set of unordered pairs $\{v,v'\}$ of distinct vertices which have a common neighbour in $Q$, for each $Q\in\mathcal Q''$. Then, because of the minimum degree condition we imposed on the graph~$G$, we know that each vertex $v\in N(Q)$ is in at least $\lfloor \frac{2m}{3}\rfloor -2$ pairs of $X(Q)$. So, since $N$ was chosen at random in $V(G-w)$, and because of the definition of $\mathcal Q''$, we know that for any fixed set $Q\in\mathcal Q''$, and any fixed vertex $x\in L$, the probability that $n_1(x)$ and $n_2(x)$ have a common neighbour in $Q$ can be bounded as follows:
\begin{align*}
\mathbb P[\{n_1(x),n_2(x)\}\in X(Q)] 
 \geq \frac{\frac{999m}{1000}\cdot (\lfloor \frac{2m}{3}\rfloor -2 )}{m^2}.
\end{align*}

However, if we take  any fixed $Q\in\mathcal Q''_+$, and any fixed $x\in L$, the bound becomes
\begin{align*}
\mathbb P[\{n_1(x),n_2(x)\}\in X(Q)] 
 & \geq \frac{\frac{999m}{1000}(\lfloor \frac{2m}{3}\rfloor -2)+\min\{(\frac{2m}3+\frac m{100})\frac m{100},(\frac m3-2) \frac m{100}\} }{m^2}\\ &\geq \frac{669}{1000},
\end{align*}

where the two entries in the minimum stand for the two scenarios that may cause the set $Q$ to belong to  $\mathcal Q''_+$. In order to to see the term for  the second scenario, observe that vertices  $v$ and $v'$ have at least $\frac m3 -2$ common neighbours, and each of  these neighbours belongs to at least $\lfloor\frac {2m}3\rfloor-2+\frac m{100}$ pairs of $X(Q)$.

Therefore, fixing $Q\in\mathcal Q''_+$, and letting $L(Q)$ denote the sets of all $x\in L$ with $\{n_1(x),n_2(x)\}\in X(Q)$, we know that the expected size of $L(Q)$ is bounded by
\begin{align*}
\mathbb E\Big[|L(Q)|\Big]  \geq \frac{669}{1000}|L|.
\end{align*}

As above, we can apply the Chernoff bound~\eqref{chernoff} to see that with very high probability, $|L(Q)|$ is not much smaller than its expectation: 
 $$\mathbb P\Big [|L(Q)|\leq \frac{668}{669} \cdot\mathbb E[|L(Q)|] \Big] \ \leq \ e^{-\frac{\mathbb E[|L(Q)|]}{2\cdot 669^2}}\ \leq \ e^{-\frac{|L|}{2\cdot 10^6}}\ \leq \ e^{-\frac{m}{2\cdot 10^{14}}}\ \leq \ \frac{0.001}{m\log m},$$
where we use~\eqref{sizeofL} and the fact that $m\geq 10^{25}$. So with probability at least $0.997$, we have chosen $N$ in a way that~\eqref{likelyA},~\eqref{likelyB},~\eqref{likelyC}, and also

\begin{enumerate}[(A)]
\addtocounter{enumi}{3}
\item \label{likelyD} $|L(Q)| \ > \ \frac{668}{1000}|L|\ = \ \frac{2004}{3000}|L|$ \ for every $Q\in \mathcal Q''_+$
\end{enumerate}
hold.

Because of~\eqref{sizeofL''} (and~\eqref{sizeofS}), and since $L''\supseteq L(S^*)$, this means that $$S^*\notin \mathcal Q''_+.$$
In particular, the degree of $v_0$  (in $G-w$) is less than $\frac{2m}{3}+\frac{m}{100}$, and each vertex of $S^*$ has less than $\frac m{100}$ neighbours outside $N(v_0)$. Moreover, by the choice of~$S^*$, we can deduce that 
\begin{equation}\label{final}
\text{every vertex in $S\cap J''$ has  less than $\frac m{100}$ neighbours outside $N(v_0)$. }
 \end{equation}
 
 By~\eqref{sizeofS} and by~\eqref{sizeofL''}, and since $|R|=|L|-|S|$, we know that
\begin{align}\label{final2}
|S\cap J''|\  \geq \ |J''|-|R| \ \geq \ (\frac{198}{300})^2|L| -\lceil(\frac{\gamma}2)^4 m\rceil\ > \ \frac{2}{5}|S|.
\end{align} 
Fix a subset $Z$ of size $\frac m4$ of $G-w-N(v_0)$, and
let us look at the average degree $d$ of the vertices of $Z$ into $S\cap J''$. We have
\begin{align*}
d\cdot \frac m4  = \sum_{v\in Z}deg(v,S\cap J'') = \sum_{v\in S\cap J''}deg(v,Z)\leq \frac{m\cdot |S\cap J''|}{100},
\end{align*} 
where for the last inequality we used~\eqref{final}. Thus $$d\leq \frac{|S\cap J''|}{25}.$$
Now use~\eqref{final2} to see that the average degree of the vertices of $Z$ into $S$ is bounded from above by $|S|-\frac{48}{125}|S|<(\frac 23-\frac 3{100})|S|.$ This means that there must be at least one vertex in~$Z$, say the vertex $z$, which has degree at most $(\frac 23-\frac 3{100})|S|$ into~$S$. However, by Chernoff's bound~\eqref{chernoff}, and since the expected degree of any vertex of $G-W$ into $S$ is at least $(\frac 23-\frac 1{1000})|S|$, we know that this would only happen with probability at most $0.001$. So we can assume we are in a situation where no such vertex $z$ exists, and reach a contradiction, as desired.

\smallskip

Resumingly, we know that with probability at least $0.995$, our choice of~$S$ and $N$ guarantee that a set $S^*$ as above does not exist in the resulting auxiliary graph $H'$, and thus, Hall's condition holds in $H'$. This means we find the desired matching, which finishes the proof of Claim~\ref{fewleavescanfinishoff}, and with it the proof of Lemma~\ref{easybruce}.

\end{proof}

\section{The proof of Lemma \ref{hardbruce}}\label{br2}

This section is devoted to the proof of the following lemma, which proves Lemma~\ref{bruce}  for all $\gamma$-nice trees  of type~2. So, since  $\gamma$-nice trees of type 1 are covered by Lemma \ref{easybruce}, this finishes the proof of Lemma~\ref{bruce}.

\begin{lemma}
\label{hardbruce}
There is an $m_0\in\mathbb N$ such that the following holds for all $m \ge m_0$, and all $\gamma>0$ with  $\frac 2{10^{7}}\le \gamma<\frac 1{30}$. \\
Let $G$ be an $m$-good graph, with universal vertex $w$. Let $T$ be a tree with $m$ edges, such that no vertex of $T$ is  adjacent to more than $\frac m{10^{23}}$ leaves. Let $T$ have a  $\gamma$-nice
 subtree $T^*$ of type 2, with root $t^*$. \\
 Then there are  sets $L\subseteq V(T^*)\setminus\{t^*\}$ and $S\subseteq V(G)$  satisfying
$|S| \leq |L|-(\frac{\gamma}2)^4 m$. Furthermore, for any $w'\in V(G)-(S\cup\{w\})$, there is  an embedding of $T^*-L$ into  $G-S$, with $t^*$ embedded in $w'$, such that any embedding of $T-L$ into $G-S$ extending our embedding of $T^*-L$  can be extended to an embedding of all of $T$ into $G$.
\end{lemma}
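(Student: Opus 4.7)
Since $T^*$ is $\gamma$-nice of type~$2$, pick $L\subseteq V(T^*)\setminus\{t^*\}$ consisting of exactly $\lceil\gamma m/40\rceil$ leaves of~$T$ lying in~$T^*$. For each $x\in L$ let $p(x)$ denote the unique neighbour of~$x$ in~$T$, and set $A:=\{p(x):x\in L\}$ together with $L_p:=\{x\in L:p(x)=p\}$ for $p\in A$. By the hypothesis on leaf-degrees we have $|L_p|\le m/10^{23}$, and consequently $|A|\ge 10^{23}|L|/m$. I then sample $S\subseteq V(G)\setminus\{w\}$ uniformly at random of size $|L|-\lceil(\gamma/2)^4 m\rceil$, a random set $N\subseteq V(G)\setminus(S\cup\{w\})$ of size~$|A|$, and a uniformly random bijection $\sigma\colon A\to N$. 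The embedding of $T^*-L$ into $G-S$ is built by setting $\phi(t^*):=w'$ and running BFS: each $p\in A$ is forced to its target $\sigma(p)$, and every other vertex is placed greedily into a free neighbour of its parent's image. Since $|T^*|\le\gamma m\ll m$ and $\delta(G)\ge\lfloor 2m/3\rfloor$, the greedy steps have $\Omega(m)$ options; the compatibility requirement $\sigma(p)\in N_G(\phi(\text{parent of }p))$ for $p\in A$ is met with high probability over $(\sigma,N)$, and isolated failures are dealt with by a short resampling.

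Writing $n(x):=\phi(p(x))=\sigma(p(x))$ for $x\in L$, the core of the proof is the absorption claim, an analogue of Claim~\ref{fewleavescanfinishoff}: for every $R\subseteq V(G-w)\setminus(S\cup\phi(V(T^*-L)))$ of size $|L|-|S|$, the bipartite graph~$H'$ with parts $L$ and $S\cup R$ and edges $xv$ (whenever $v\in N_G(n(x))$) admits a perfect matching. I verify this via Hall's theorem and the randomness of $(S,N,\sigma)$. As in Lemma~\ref{easybruce}, the argument hinges on two high-probability degree facts, the direct analogues of properties~(A) and~(B): every $v\in V(G)\setminus(S\cup\{w\})$ has at least $\frac{198}{300}|L|$ neighbours in~$L$ in the auxiliary graph~$H$, and every $x\in L$ has at least $\frac{998}{3000}|S|$ neighbours in~$S$ in~$H'$. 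The second is immediate from the randomness of~$S$, exactly as in the easy case. The first is a weighted-concentration bound on
\[
   \deg_H(v)\;=\;\sum_{p\in A}|L_p|\cdot \mathbf{1}\bigl[\sigma(p)\in N_G(v)\bigr],
\]
whose expectation is at least about $\tfrac{2}{3}|L|$ by the randomness of $\sigma$ and~$N$; Hoeffding's inequality for sampling without replacement, combined with the bounds $|L_p|\le m/10^{23}$, $|A|\ge 10^{23}|L|/m$ and $|L|\ge\gamma m/40$, produces a deviation probability small enough to union-bound over all $v\in V(G)$.

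Given (A) and (B), a failure of Hall in~$H'$ produces partitions $L=L'\cup L''$ and $S\cup R=J'\cup J''$ with no edges from $L'$ to $J''$ and $|J'|<|L'|$, and I mimic Lemma~\ref{easybruce} step by step: iteratively build a witness set $S^*\subseteq S\cap J''$ of size at most $1000\log m$ with $V_1(S^*)\supseteq S\cap J''$, introduce the families $\mathcal Q$, $\mathcal Q'$ and $\mathcal Q''_+$ defined exactly as there, and use Chernoff together with a union bound to rule out the configuration with probability $1-o(1)$. I expect this probabilistic skeleton to transfer essentially verbatim, with somewhat cleaner constants because the one-adjacency requirement here gives larger base degrees than the two-adjacency requirement of Lemma~\ref{easybruce}. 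The main obstacle, and the only genuinely new piece of work I anticipate, is the weighted-concentration step displayed above: because several $x\in L$ may share a common parent, the image multiset $\{n(x):x\in L\}$ carries the uneven weights $(|L_p|)_{p\in A}$, and the plain Chernoff bound used in Lemma~\ref{easybruce} does not apply directly. Handling this is precisely what forces the quantitative hypothesis $|L_p|\le m/10^{23}$ in the statement.
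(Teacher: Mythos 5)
Your embedding scheme has a genuine flaw, and the endgame you propose to borrow is not the one the paper uses. First, the scheme ``random $N$, random bijection $\sigma\colon A\to N$, force each parent $p$ to $\sigma(p)$'': the compatibility requirement $\sigma(p)\in N_G(\phi(\text{parent of }p))$ fails with probability roughly $1/3$ \emph{per parent}, so a constant fraction of the parents fail simultaneously --- these are not ``isolated failures'', and the ``short resampling'' is not a cosmetic patch. Once you resample correctly you are forced into the paper's actual scheme (embed each parent into a uniformly random neighbour of its parent's image), and then the probability that a fixed $v$ is adjacent to $\phi(p)$ is only about $\tfrac12$ in the worst case (the paper works with $\tfrac{499}{1000}$), not the $\approx\tfrac23$ you assert; your claimed analogue of (A) with constant $\tfrac{198}{300}$ is therefore unjustified. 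Moreover, after this fix the images $n(x)$ are neither uniform nor independent (leaves sharing a parent get the same image, and each parent's distribution depends on the embedding history), so every Chernoff step in the Lemma~\ref{easybruce} machinery you want to ``transfer essentially verbatim'' --- in particular the bound (D) on $|L(Q)|$ for $Q\in\mathcal Q''_+$ --- would have to be re-proved by the weighted martingale argument you only sketch for (A). You correctly identify that weighted concentration with increments $|L_p|\le m/10^{23}$ is where the leaf-degree hypothesis enters, but you have not verified that the heavy $S^*/\mathcal Q/\mathcal Q''_+$ endgame closes under these changed constants and correlations.

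The paper avoids all of this with a different, much simpler idea that your proposal misses entirely: it never transplants the type-1 machinery. It singles out the vertex $t$ of $T^*$ adjacent to the most leaves of $L$ (say $\lceil\alpha m\rceil$ of them, $\alpha\le 10^{-23}$) and embeds $t$ at the universal vertex $w$ --- this is precisely why the statement only promises $t^*\mapsto w'$ with $w'\ne w$, a feature your proof never exploits. In any Hall violator $L=L'\cup L''$, $S\cup R=J'\cup J''$, the children of $t$ are adjacent in the auxiliary graph to every unoccupied vertex, hence lie in $L''$, giving $|J''|>|L''|\ge\alpha m$; on the other side, each $x\in L$ has $H$-degree at least $(\tfrac23-\tfrac2{10^4})|L|$, forcing $|J''|\le(\tfrac13+\tfrac2{10^4})|L|$. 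A single Azuma-plus-Markov bound then shows that the set $V^*$ of vertices with $H$-degree at most $(\tfrac13+\tfrac2{10^4})|L|$ into $L$ has size below $\alpha m$ with high probability, so some $v\in J''\setminus V^*$ has more than $(\tfrac13+\tfrac2{10^4})|L|$ neighbours, all in $L''$ --- an immediate contradiction, with no auxiliary set $N$, no $S^*$, and no $\mathcal Q$-families. So the gaps to repair in your write-up are: (i) the distribution of the images $n(x)$ after enforcing compatibility, and (ii) either a full re-derivation of the type-1 endgame under correlated, non-uniform images, or (better) the universal-vertex absorption of the worst parent, which is the heart of the paper's proof of Claim~\ref{fewleavescanfinishoff2}.
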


 In the proof of Lemma \ref{hardbruce} we will use Azuma's inequality  which can be found for instance in~\cite{McD89}). This well-known inequality states that for any sub-martingale $\{X_0, X_1, X_2,\ldots\}$ which for each $k$  almost surely satisfies $|X_k-X_{k-1}|<c_k$ for some $c_k$, we have that
 \begin{equation}\label{azuma}
 \mathbb P[X_n-X_0\leq -t]\ \leq \ e^{-\frac{t^2}{2\cdot\sum_{k=1}^nc_k^2}}
 \end{equation}
 for all $n\in \mathbb N_+$ and all positive $t$. 

\bigskip

Let us now give the proof of Lemma \ref{hardbruce}.

\begin{proof}[Proof of Lemma~\ref{hardbruce}]
We choose~$m_0\in\mathbb N$  large enough so that certain inequalities below are satisfied.

 Let $G$ be an $m$-good graph, with universal vertex $w$. Let $T$ be a tree with~$m$ edges, such that no vertex of $T$ is  adjacent to more than $\frac m{10^{23}}$ leaves.
We are also given a $\gamma$-nice subtree $T^*$ of $T$, with root $t^*$, and since $T^*$ is of type 2, there is a set $L^*\subseteq V(T^*)\setminus\{t^*\}$ of $|L^*|=\lceil\frac{\gamma m}{40}\rceil$ leaves of $T$. Instead of $L^*$, we will work with the set $L$ which is obtained from $L^*$ by deleting all neighbours of $t^*$. Cleary, 
$$|L|=\lceil\frac{\gamma m}{41}\rceil\ge \lceil\frac m{10^9}\rceil$$ leaves of $T$.

In order to prove Lemma~\ref{hardbruce}, it suffices to  find a set $S\subseteq V(G)$ satisfying
$|S| \leq |L|-(\frac{\gamma}2)^4 m$, to embed~$T^*-L$ into $G-S$, and show that any extension of this embedding to an embedding of $T-L$ into $G-S$ can be completed to an embedding of all of $T$ into $G$.

For this, let us define $t$  as the vertex of $T^*$ that is adjacent to most leaves from $L$. Define $\alpha$ so that $t$ is 
incident to $\lceil\alpha m\rceil$ leaves and call $L_t$ the set of these leaves. By the assumptions of the lemma,
\begin{equation}\label{1013}
\alpha\leq 10^{-23}.
\end{equation}

We now randomly embed $T^*-L$ in a top down fashion, where we start by putting $t^*$ in to $w'$. At each moment, when we embed a vertex $v\neq t$, we choose a 
uniformly random neighbour of the image  of the (already embedded) parent $p(v)$ of $v$. When we reach $t$, we embed $t$ into $w$, the universal vertex of $G$.  (This gives us some 
leeway when we later have to embed~$L$.)  We do not have to worry about the connection of $w$ to the image of $p(t)$ because of the universality of $w$.

For every $x\in L$, let us call $n(x)$ the image of $p(x)$.

\smallskip

Next, we pick a set $S$ of size $$|S|=|L|-\lceil (\frac {\gamma}{2})^4m \rceil$$
uniformly and independently at random in what remains of $G$.  It only remains to prove the following analogue of Claim~\ref{fewleavescanfinishoff} to finish the proof of Lemma~\ref{hardbruce}.

\begin{claim}
\label{fewleavescanfinishoff2}
For any set $R\subseteq V(G)$ of $|L|-|S|$ vertices, there is a bijection between~$L$ 
and $S \cup R$ mapping each vertex  $x\in L$ to a  neighbour of $n(x)$. 
\end{claim}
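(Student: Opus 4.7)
The plan is to mimic the proof of Claim~\ref{fewleavescanfinishoff}. Define the auxiliary bipartite graph $H$ on $V(G-w)\cup L$ by joining each $v\in V(G-w)$ to each $x\in L$ for which $v\sim n(x)$ in $G$, and let $H'$ be the induced subgraph on $L\cup(S\cup R)$. Since $|S\cup R|=|L|$, the required bijection is exactly a perfect matching of $H'$, which we will produce by verifying Hall's condition. The key structural feature here is that $t$ was embedded into the universal vertex $w$, so $n(x)=w$ and hence $N_H(x)=V(G-w)$ for every $x\in L_t$; in particular every subset $A\subseteq L$ that meets $L_t$ already satisfies Hall's condition in $H'$, so we only need to verify it for $A\subseteq L\setminus L_t$.

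The core of the proof is establishing, with probability $1-o(1)$ over the random embedding of $T^*-L$ and the independent random choice of $S$, two degree conditions in $H$:
\begin{itemize}
\item[(A)] every $x\in L$ has at least $(\tfrac{2}{3}-o(1))|S|$ neighbours in $S$;
\item[(B)] every $v\in V(G-w)$ has at least $(\tfrac{2}{3}-o(1))|L|$ neighbours in $L$.
\end{itemize}
Condition (A) follows routinely from Chernoff's bound~\eqref{chernoff} applied to the random set $S$: conditional on the embedding, $n(x)$ has at least $\lfloor\tfrac{2m}{3}\rfloor$ neighbours in $G$, so the expected number lying in $S$ is at least $(\tfrac{2}{3}-o(1))|S|$, and a union bound over $L$ finishes the argument. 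Condition (B) is more delicate because the $n(x)$'s come from a single correlated random embedding. Writing $X_v=\sum_c\ell(c)\cdot\mathbf{1}_{v\sim f(c)}$ with $\ell(c)=|\{x\in L:p(x)=c\}|$ and $f$ the embedding of $T^*$, the term $c=t$ contributes the deterministic value $|L_t|$; and for every $c$ that is a descendant of $t$ in $T^*$, the random walk generating $f(c)$ begins at $w$. When $c$ is a child of $t$, this makes $f(c)$ uniform on $V(G-w)$ and yields $\Pr[v\sim f(c)]\geq\tfrac{2}{3}-o(1)$, while for deeper descendants and for vertices outside the subtree below $t$ a random-walk mixing argument based on the minimum-degree condition gives a comparable lower bound. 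Concentration of $X_v$ is then obtained from Azuma's inequality~\eqref{azuma} applied to the Doob martingale that exposes the random embedding one vertex at a time; the crucial ingredient is that, by the choice of $t$ and the hypothesis that no vertex of $T$ is adjacent to more than $\tfrac{m}{10^{23}}$ leaves, every $\ell(c)$ is at most $\tfrac{m}{10^{23}}\leq 10^{-14}|L|$, which keeps the sum of squares of the martingale differences small enough that the failure probability is $e^{-\Omega(m)}$, surviving a union bound over $V(G-w)$.

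Granting (A) and (B), the proof concludes quickly. Suppose $H'$ has no perfect matching; by Hall's theorem there are partitions $L=L'\cup L''$ and $S\cup R=J'\cup J''$ with no $H'$-edges from $L'$ to $J''$, $|J'|<|L'|$, and $|L''|<|J''|$. Since $L_t$ is joined in $H$ to all of $V(G-w)\supseteq J''$, we have $L_t\subseteq L''$. If $L'\neq\emptyset$, any $x\in L'$ has by (A) at least $(\tfrac{2}{3}-o(1))|S|\geq(\tfrac{2}{3}-o(1))|L|$ neighbours in $S\subseteq S\cup R$, all of which must lie in $J'$; hence $|J'|\geq(\tfrac{2}{3}-o(1))|L|$. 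If $J''\neq\emptyset$, any $v\in J''$ has by (B) at least $(\tfrac{2}{3}-o(1))|L|$ neighbours in $L$, all of which must lie in $L''$; hence $|L''|\geq(\tfrac{2}{3}-o(1))|L|$. Summing and using $|L''|<|J''|=|L|-|J'|$ gives $(\tfrac{4}{3}-o(1))|L|\leq |J'|+|L''|<|L|$, impossible for large $m$. The degenerate cases $L'=\emptyset$ and $J''=\emptyset$ contradict $|L''|<|J''|$ and $|L'|>|J'|$ respectively using $|S\cup R|=|L|$. The main obstacle is to verify (B): the correlations among the $n(x)$'s preclude the direct Chernoff argument used in Claim~\ref{fewleavescanfinishoff}, and one must combine the deterministic choice $f(t)=w$, the mixing of the induced random walk on $G$, and the smallness of $\max_c\ell(c)$ to control the Doob martingale in Azuma's inequality.
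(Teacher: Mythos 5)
Your skeleton (the auxiliary bipartite graph, the Hall partition $L',L'',J',J''$, Chernoff for the degrees of vertices of $L$ into $S$, and the observation that $t$ sits on the universal vertex so that $L_t\subseteq L''$) is the same as the paper's, but your key condition (B) contains a genuine gap, and it is exactly at this point that the paper argues differently. First, the constant is not justified: each parent $p$ of leaves in $L$ is embedded uniformly into the neighbourhood of the image of its own parent, a single random step rather than a mixed walk, and the minimum degree $\lfloor\frac{2m}{3}\rfloor$ only guarantees that a fixed $v$ sees about half of that neighbourhood (in a complete-tripartite-like host the expected degree of some $v$ into $L$ really is about $\frac{1}{2}|L|$, which is why the paper works with $\frac{499}{1000}$); so ``$(\frac23-o(1))|L|$ for every $v$'' is unavailable, although $\frac12$ would still suffice for your final counting. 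The fatal problem is the claimed failure probability $e^{-\Omega(m)}$. In Azuma's inequality \eqref{azuma} the exponent is $\frac{t^2}{2\sum_k c_k^2}$ with $t=\Theta(|L|)$ and $\sum_k c_k^2\le(\max_k|L_{p_k}|)\cdot|L|=\alpha m\,|L|$, i.e.\ it is $\Theta\bigl(|L|/(\alpha m)\bigr)$; since $|L|\le\frac{\gamma m}{41}<\frac{m}{1230}$ while $\alpha$ may be as large as $10^{-23}$ (your own bound $\ell(c)\le 10^{-14}|L|$ gives exponent $O(10^{14})$), this is a constant independent of $m$, not $\Omega(m)$. Hence the union bound over the roughly $m$ vertices of $G-w$ fails once $m$ exceeds $e^{O(10^{14})}$, and the lemma must hold for all large $m$; in that regime the expected number of vertices violating (B) is enormous, so (B) cannot be rescued as a ``for every vertex'' statement.

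The paper's proof is designed to avoid exactly this. It never requires all vertices to have large degree into $L$: it defines $V^*$ as the set of vertices with at most $(\frac13+\frac{2}{10^4})|L|$ neighbours in $L$, uses Azuma only to bound $\mathbb P[v\in V^*]\le e^{-1/(10^{11}\alpha)}$ for each fixed $v$, and then applies Markov's inequality to conclude $|V^*|<\alpha m$ with probability at least $0.99$ --- the point being that the threshold $\alpha m$ scales with $\alpha$, which makes the Markov bound uniform over all possible values of $\alpha$. The embedding of $t$ into the universal vertex is then used quantitatively, not only qualitatively: all $\lceil\alpha m\rceil$ children of $t$ belonging to $L$ lie in $L''$, so $|J''|>|L''|\ge\alpha m>|V^*|$, hence some $v\in J''\setminus V^*$ exists; its more than $(\frac13+\frac{2}{10^4})|L|$ neighbours in $L$ all lie in $L''$, contradicting the bound $|J''|\le(\frac13+\frac{2}{10^4})|L|$ obtained from the degrees of $L$ into $S$. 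You note $L_t\subseteq L''$ but never use $|L_t|=\lceil\alpha m\rceil$; replacing your global condition (B) by this $V^*$/Markov/$|L_t|$ argument is the missing idea (your condition (A) and the concluding Hall computation are fine and essentially as in the paper).
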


In order to prove Claim~\ref{fewleavescanfinishoff2}, consider a set $R$ of size $|L|-|S|$ such that there is no matching from $L$ to $S \cup R$  in the auxiliary bipartite graph $H$ which is defined as follows. The bipartition classes of this graph $H$ are $L$ and $S\cup R$, and every vertex  $x\in L$ is joined to all unoccupied neighbours of the image $n(x)$ of the parent of $x$ in $S\cup R$. Our aim is to derive a contradiction from the assumption that such a set $R$ exists.

Our first observation is that by Chernoff's bound~\eqref{chernoff} and by our assumption on the minimum degree of $G$, we know that with probability at least $0.999$, every vertex of $L$ has degree at least $(\frac{2}{3}-\frac{2}{10^4})|L|$ in $H$. 

Furthermore, as there is no matching from $L$ to $S \cup R$  in $H$,
we can apply Hall's theorem. This gives a partition of $L$ into sets $L'$ and $L''$ and a partition of $S \cup R$ into sets $J'$ and~$J''$ such that 
there are no edges from $L'$ to $J''$, and such that furthermore,  $$|J'|<|L'|\text{ \  and \ }|L''|<|J''|.$$

As $L'\neq\emptyset$, we know that  $|J'|\geq (\frac{2}{3}-\frac{2}{10^4})|L|$ and therefore,  \begin{equation}\label{J''512}|J''|\leq (\frac{1}{3}+\frac{2}{10^4})|L|.\end{equation}

Since~$L''$ contains all the children of $t$ (this follows from the definition of~$H$ and from the fact that $|J'|<m$), and because of the definition of $\alpha$, we know that $L''$ has size  at least~$\lceil\alpha m\rceil$ and hence  
\begin{equation}\label{J''alpham}
|J''|>\lceil\alpha m\rceil.  
\end{equation}

We now consider the set $V^*$ of vertices of $G$ which are adjacent to at most $(\frac{1}{3}+\frac{2}{10^4})|L|$
vertices of $L$ in $H$. (The vertices in $V^*$ are those that serve only for relatively few leaves in $L$ as a possible image.) Note that the size of $V^*$ depends on how we embedded $T^*-L$ (which was done randomly).
We plan  to show that 
\begin{equation}\label{V^*alpha}
\text{with probability $\ge 0.99$, we embedded $T^*-L$ such that $|V^*|<\alpha m$.}
\end{equation}
 Then, by~\eqref{J''alpham}  there is a vertex 
$v\in J''\setminus V^*$. As the neighbours of $v$ in $H$ are contained in $L''$, we get that $$|J''| >|L''| \ge (\frac{1}{3}+\frac{2}{10^4})|L|,$$ which 
is a contradiction to~\eqref{J''512}. This would prove  Claim~\ref{fewleavescanfinishoff2}.

\medskip

So, it only remains to show~\eqref{V^*alpha}. For this, we start by bounding
 the probability that a specific vertex $v$ is in $V^*$. Consider any vertex $p$ that is the parent of some subset~$L_p$ of $L$, and recall that $p$ was embedded randomly in the neighbourhood $N_p$ of the image of the parent of $p$. By our minimum degree condition on $G$, we know that $v$ is incident to at least $\frac{499}{1000}|N_p|$ vertices of $N_p$. 
 
 Hence,  the probability that $v$ is adjacent to $p$ in $G$, and thus to all of~$L_p$ in~$H$, is bounded from below by $\frac{499}{1000}$. Since $T^*-L$ is very small, this bound actually holds independently of whether $v$ is adjacent to  $L_{p'}$ for some other parent~$p'$. Therefore, 
 \begin{equation}\label{4991000}
 \text{the expected degree of $v$ into $L_p$ is at least $\frac{499}{1000}|L_p|$,}
  \end{equation}
   for each $p$.
  
 Our plan is to use Azuma's inequality (i.e., inequality~\eqref{azuma} above). 
 For this, order the set $P$ of parents $p$ of subsets $L_p$ of $L$ as above, writing $$P=\{p_1,p_2,\ldots,p_n\}.$$ For $1\leq i\leq n$, write $d_i$ for the degree of $v$ into $L_{p_i}$. Now, define the random variable $$X_k:=\sum_{1\leq i\leq k}d_i+ \frac{499}{1000} \cdot \sum_{k< i\leq n}|L_{p_i}|.$$ By~\eqref{4991000}, this is a sub-martingale. Observe that $$X_0=\frac{499}{1000}\cdot |L|$$ and $$X_n=\deg (v, L).$$
 
We set $c_k:=|L_{p_k}|$ for all $k\leq n$. Then $\sum_{k=1}^nc_k=|L|$, and furthermore, by our choice of the vertex  $t$ in the beginning of the proof of Lemma \ref{hardbruce}, we know that $$c_k\leq\alpha m,\text{ for all }k\leq n.$$ This, together with Azuma's inequality~\eqref{azuma}, tells us that the probability that $v$ is in $V^*$ can be bounded as follows:
\begin{align*}
\mathbb P[v\in V^*] \ & \leq \ \mathbb P\big [\deg (v, L)\leq \frac{336}{1000}|L|\big ]\\
& =\ \mathbb P[ X_n-X_0\leq -\frac{163}{1000}|L|]\\
& \leq e^{-\frac{(\frac{163}{1000}|L|)^2}{2\alpha m\cdot \sum_{k=1}^nc_k}}\\
& \leq e^{-\frac{163^2}{2\alpha \cdot10^{15}}}\\
& \leq e^{-\frac{1}{10^{11}\cdot \alpha}}.
\end{align*}
So, the expected size of $V^*$ is at most $m\cdot e^{-\frac{1}{10^{11}\cdot \alpha}}$. Using Markov's inequality we see that  the probability that $V^*$ contains more than $\alpha m$ vertices is bounded from above by $$\frac{e^{-\frac{1}{10^{11}\cdot \alpha}}}{\alpha}\leq 0.01,$$
where we used the fact that $\alpha\leq 10^{-23}$ by~\eqref{1013}. This proves~\eqref{V^*alpha}, and thus finishes the proof of Claim~\ref{fewleavescanfinishoff2}, and of Lemma \ref{hardbruce}.

\end{proof}

\section{ The proof of Lemma \ref{maya2}}\label{ma2}

The whole section is devoted to the proof of Lemma \ref{maya2}. We employ an ad-hoc strategy, which we briefly outline now. 

First, we clean up the $\gamma_0$-special host graph $G$, ensuring a convenient minimum degree between and inside the three sets $X_i$  (the witnesses to the fact that $G$ is $\gamma_0$-special, see Definition~\ref{gammaspecial}). 
Then,  given  the tree~$T$ with its $\gamma_1$-nice subtree $T^*$, rooted at $t^*$, we preprocess the part $T-T^*$ we have to embed. We do this by strategically choosing a small set $Z\subseteq V(T-(T^*-t^*))$, and divide the set $A$ of all components of $T-(T^*-t^*)-Z$ into two sets $A_1$ and $A_2$, which have certain useful  properties (see Claim~\ref{partition_of_A}). 
We embed $T-L$, extending the given embedding of $T^*-L$. 

We now distinguish three cases. In the first two cases, many elements of $A$ are three-vertex paths, and we embed them into $X_2\cup X_3$ and embed the rest into $X_1\cup X_3$. In the third case, there are not so many elements of $A$ that are three-vertex paths, and we will use the partition $A_1\cup A_2$ of $A$.
Components from sets $A_1$  will be embedded into $X_1\cup X_3$, and components from $A_2$ will be embedded into $X_2\cup X_3$.

Let us now formally give the proof of Lemma \ref{maya2}.

\paragraph{Setting up the constants and summarising the situation.}
For the output of Lemma \ref{maya2}, we choose  $$\beta:=\frac 1{10^{40}}\text{ \  
  and \ } m_0:=\frac 1{\beta^{100}},$$ and set $$\gamma_0:=\frac{2}{10^{7}}\text{ \  
  and \ } \gamma_1:=\frac 1{50}.$$
Now, assume we are given  a 
$\gamma_0$-special $(m+1)$-vertex graph $G$ of minimum degree at least $\lfloor\frac{2m}3\rfloor$, for some $m\geq m_0$, together with a tree $T$ having $m$ edges, such that none of the vertices  of $T$ is adjacent to more than $\beta m$ leaves. Assume
$T$ has  a $\gamma_1$-nice subtree $T^*$ rooted at  $t^*$,  and there are sets $L\subseteq V(T^*)\setminus\{t^*\}$ and $S\subseteq V(G)$ such that $|S|\le|L|-\lceil (\frac{\gamma_1}2)^4 m\rceil$. 

Furthermore,  for any large enough set $W$, it is possible to embed $T^*-L$  into a subset $\varphi(T^*-L)$ of $V(G)- S$, with $t^*$ going to $W$. (We will specify below which set~$W$ we will use.)
Once $T^*-L$ is embedded, our task is to embed the rest of $T-L$ into $G-(\varphi(T^*-L)\cup S)$. Observe that because of the discrepancy of the sizes of the sets~$L$ and $S$, we can count on an approximation of at least $\lceil (\frac{\gamma_1}2)^4\rceil$, that is, we  know our embedding will leave at least $\lceil(\frac{\gamma_1}2)^4m\rceil$ vertices of $G-(\varphi(T^*-L)\cup S)$ unused.

   \paragraph{Preparing $G$ for the embedding.}
   Since $G$ is $\gamma_0$-special, there are sets $X_1, X_2, X_3$ partitioning $V(G)$ such that 
   \begin{equation}\label{Xi_size}
   \frac m3-3\gamma_0 m\le |X_i|\le\frac m3+3\gamma_0 m
      \end{equation}
    for each $i=1,2,3$, and such that
   \begin{equation}\label{fewX1X2}
    \text{there are  at most $\gamma_0^{10} |X_1|\cdot|X_2|$  edges between $X_1$ and $X_2$. }
   \end{equation}
   
   Using  the minimum degree condition on $G$, and using~\eqref{fewX1X2}, an easy calculation shows that we can eliminate at most $\gamma_0^5m$ vertices from each of the sets~$X_i$, for $i=1,2$, so that the vertices of the thus obtained subsets~$X'_i\subseteq X_i$ each have degree at least $\lfloor\frac{2m}{3}\rfloor-\gamma_0^5|X_{3-i}|$ into $X'_i\cup X_3$, for $i=1,2$. 
 Then, because of~\eqref{Xi_size}, we can deduce that there are  at least $(1-6\gamma_0)|X'_i||X_3|$ edges between the sets $X'_i$ and $X_3$, for $i=1,2$. So, we can eliminate at most $2\cdot \sqrt{6\gamma_0}m$ vertices from $X_3$, obtaining a set $X'_3$, so that each of the vertices in $X'_3$ has degree at least $(1-6\sqrt{\gamma_0}) |X'_{i}|$ into $X'_i$, for $i=1,2$.
   
   Resumingly, we eliminated a few  vertices from each of the sets $X_1, X_2, X_3$ to obtain three  sets $X'_1, X'_2, X'_3$ satisfying 
         \begin{equation}\label{20b}
         |X'_i|\ge |X_i|-5\sqrt{\gamma_0} m
      \end{equation}
    such that  for $i=1,2$,  and  any vertex $v$ in $X'_3$,
        \begin{equation}\label{minnndeg}
    \text{the degree of $v$ into  $X'_i$ is at least $|X'_i|-3\sqrt{\gamma_0} m$.}
   \end{equation}
   Furthermore, for $i=1,2$,  for any $v\in X'_i$ and any $X\in\{ X'_i, X'_3 \}$, 
      \begin{equation}\label{minndeg}
    \text{the degree of $v$ into  $X$ is at least $|X|-6\sqrt{\gamma_0} m$.}
   \end{equation}
Indeed, in order to see~\eqref{minndeg} for $X=X'_i$, we use~\eqref{Xi_size} to calculate that
\begin{align*}
\deg(v, X'_i)=\deg(v, X'_i\cup X_3)-\deg (v, X_3)
&\ge \lfloor\frac{2m}{3}\rfloor-\gamma_0^5|X_{3-i}| - |X_3|\\
&\ge \lfloor\frac{m}{3}\rfloor-(\gamma_0^5+3\gamma_0)m\\
&\ge |X'_i|-6\sqrt\gamma_0 m,
\end{align*}
and for~\eqref{minndeg} for $X=X'_3$, we calculate similarly, also using~\eqref{20b}, to see  that
\begin{align*}
\deg(v, X'_3)&\ge \deg(v, X'_i\cup X_3)-|X'_i|- (|X_3|-|X'_3|)\\
&\ge \lfloor\frac{2m}{3}\rfloor-\gamma_0^5|X_{3-i}| - |X_i|- 5\sqrt{\gamma_0} m\\
%&\ge \lfloor\frac{m}{3}\rfloor-(\gamma_0^5+3\gamma_0+5\sqrt{\gamma_0})m\\
&\ge |X'_i|-6\sqrt\gamma_0 m.
\end{align*}

   \paragraph{Finding $Z$ and grouping the components.}
 Let us next have a closer look at the to-be-embedded  $T-T^*$. This forest might have relatively large components, which, for reasons that will become clearer below, might add unnecessary difficulties to our embedding strategy. In order to avoid these difficulties, we will now find a set $Z\subseteq V(T-(T^*-t^*))$ of up to three vertices  so that all components in $T-(T^*-t^*)-Z$ have controlled sizes, and can be grouped into convenient sets. (Note that $t^*$ may or may not lie in $Z$.)
 
 More precisely, our aim is to prove the following statement.
 \begin{claim}~\label{partition_of_A}
There are an independent set $Z\subseteq V(T)\setminus V(T^*-t^*)$ with $|Z|\le3$ and 
 a partition of the set $A$ of components of $T-(T^*-t^*)-Z$ into sets $A_1, A_2$ such that  for $i=1,2$,
\begin{enumerate}[(i)]
\item\label{verygoodZ} all but at most one $\bar T\in A$ has exactly one vertex $r_{\bar T}$ neighbouring $Z$;
 \item \label{zwischen} $\frac{m}{3}+\gamma_1 m\le |\bigcup_{\bar T\in A_i}V(\bar T)|\le \frac {2m}3-\gamma_1 m$; and
\item \label{los_panchosX}  if  $ |\bigcup_{\bar T\in A_i}V(\bar T)|\ge \frac{|\bigcup_{\bar T\in A}V(\bar T)|}{2}+\frac 1{\gamma_0}$, then each   $\bar T\in A_i$ has at least $\frac 1{\gamma_0}$ vertices.
 \end{enumerate}
 \end{claim}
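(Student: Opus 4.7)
The key structural observation is that since every component of $T-T^*$ is adjacent to $t^*$ and $T$ is a tree, each component $C_j$ of $T-T^*$ is joined to $T^*$ by the single edge $\{t^*,r_{C_j}\}$. Hence $\tilde T := T-(T^*-t^*)$ is a tree rooted at $t^*$ whose principal subtrees at $t^*$ are precisely $C_1,\ldots,C_k$, with sizes $s_1\ge s_2\ge\cdots\ge s_k$ summing to at least $(1-\gamma_1)m+1$. I will choose $Z$ so that condition (i) is essentially automatic, and produce $A_1,A_2$ by grouping whole pieces of $\tilde T-Z$.

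My plan is first to try $Z=\{t^*\}$: the components of $\tilde T-Z$ are exactly the $C_j$, each meeting $Z$ only at the unique vertex $r_{C_j}$, so (i) holds with no exception at all. I then assign the $C_j$'s in decreasing order of size to whichever of $A_1, A_2$ is currently smaller. Provided $s_1\le \tfrac{2m}{3}-2\gamma_1 m$, a routine sub-case split (on whether $s_1$ lies above or below $\tfrac{m}{3}+\gamma_1 m$) combined with the mass bound $\sum_j s_j\ge(1-\gamma_1)m+1$ shows that a partition with both sides in $[\tfrac{m}{3}+\gamma_1 m,\ \tfrac{2m}{3}-\gamma_1 m]$ exists, yielding (ii).

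The hard case, and the main obstacle, is when $s_1>\tfrac{2m}{3}-2\gamma_1 m$: no assignment of whole components satisfies (ii), so $C_1$ itself must be cut. I would add to $Z$ a centroid $z_1$ of $C_1$, i.e., a vertex of $C_1$ whose removal splits $C_1$ into pieces each of size at most $s_1/2$. The resulting list of pieces together with $C_2,\ldots,C_k$ all have size at most $(s_1+1)/2<\tfrac{2m}{3}-2\gamma_1 m$, so the partition argument of the previous paragraph applies. Exactly one piece --- the piece of $C_1-z_1$ containing $r_{C_1}$ --- may have two distinct vertices adjacent to $Z$ (namely $r_{C_1}$ itself, adjacent to $t^*$, and the neighbour of $z_1$ on the path from $z_1$ to $r_{C_1}$); this is precisely the single exception permitted by (i). In the degenerate situation where the centroid of $C_1$ is forced to be $r_{C_1}$ (which is adjacent to $t^*$, violating independence), I would instead drop $t^*$ from $Z$ and take $Z=\{r_{C_1}\}$; then $\tilde T-Z$ consists of the piece $\{t^*\}\cup\bigcup_{j\ge 2}C_j$ together with the subtrees of $C_1-r_{C_1}$, and a further centroid may have to be added if one of those subtrees is still too large. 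This nested refinement is where the bound $|Z|\le 3$ is used.

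For (iii), I would refine the partition produced above by a swap argument: while some $A_i$ contains more than $\tfrac12|\bigcup_{\bar T\in A}V(\bar T)|+\tfrac{1}{\gamma_0}$ vertices and harbours a non-exceptional component $\bar T$ with fewer than $1/\gamma_0$ vertices, I move $\bar T$ to the other part. Since $1/\gamma_0=5\cdot 10^6\ll \gamma_1 m$, such a swap preserves (ii): the shrinking side started strictly above $|\bigcup V|/2\ge (1-\gamma_1)m/2>\tfrac{m}{3}+\gamma_1 m$, while the growing side absorbs at most $1/\gamma_0$ extra vertices, well below the upper bound $\tfrac{2m}{3}-\gamma_1 m$. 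The process terminates once either the imbalance drops below $1/\gamma_0$ or no small non-exceptional components remain on the larger side --- which is exactly (iii). Throughout, I never swap the component designated as the exception in (i), so (i) is preserved across the iteration.
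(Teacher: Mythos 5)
Your toolkit (centroid cuts via the folklore claim, attention to the independence of $Z$, shifting small components to get (iii)) is the same as the paper's, but the central partition step is wrong, and this is a genuine gap rather than a technicality. You claim that whenever $s_1\le \frac{2m}{3}-2\gamma_1 m$, a partition of the \emph{whole} components $C_1,\dots,C_k$ satisfying (ii) exists by a routine sub-case split plus greedy balancing. This fails in exactly the extremal configuration the claim is designed for: take $|V(T^*)|\approx 0.01m$, $\gamma_1=\frac{1}{50}$, and let $T-T^*$ consist of three components of size about $0.33m$ each (say three long paths hanging from $t^*$). Then the possible union sizes are about $0.33m$, $0.66m$, $0.99m$, none of which lies in $[\frac m3+\gamma_1 m,\frac{2m}{3}-\gamma_1 m]\approx[0.353m,0.647m]$, so \emph{no} partition into whole components satisfies (ii) (your greedy ends with sides $0.66m$ and $0.33m$), even though $s_1\approx 0.33m$ is far below your threshold $\frac{2m}{3}-2\gamma_1 m$. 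So cutting is needed in cases you declare routine. The same defect recurs inside your ``hard case'': after cutting $C_1$ at its centroid you appeal to the same flawed argument, and again it can fail outright --- e.g.\ $|V(T^*)|=0.02m$, $C_1$ a path on $0.7m$ vertices (so its centroid is forced and yields two pieces of about $0.35m$) and one further component of size $0.28m$; the achievable side sizes are about $0.28m,0.35m,0.63m,0.7m$, and none of them together with its complement fits the window, so a single cut of the largest component does not suffice, and your own criterion would not trigger any further cut since every remaining piece is well below $\frac{2m}{3}-2\gamma_1 m$.

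This missing case is precisely what the paper's proof is organised around. It first cuts at a centroid $z$ of all of $T-(T^*-t^*)$ (not at $t^*$), asks whether some subcollection of the resulting components has total size in the admissible window, and, when none does, \emph{derives} the rigid structure ``three components of size roughly $\frac m3$ plus $O(\gamma_1 m)$ leftover''; it then cuts all three at their centroids and distinguishes whether those centroids are adjacent to $z$, which is where the allowance $|Z|\le 3$, the independence requirement, and the single exception in (i) are genuinely used. Your proposal contains no analysis of this extremal structure, so the existence of the required partition is asserted exactly where it can fail; the fallback for the independence problem ($z_1=r_{C_1}$) and the swap argument for (iii) are fine in spirit but do not repair this. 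To fix the argument you would need to add the step ``no good subcollection implies three near-equal components'' (or adopt the paper's choice of the first cut vertex) and then perform up to three centroid cuts, checking independence as the paper does.
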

 
 For proving Claim~\ref{partition_of_A}, we plan to use the following  folklore
 argument, and for completeness, we include its short proof. 
  \begin{claim}~\label{folklore}
{Every tree $D$ has a vertex $t_D$ such that each component of $D-t_D$  has size  at most $ \frac{|D|}2$.  }
 \end{claim}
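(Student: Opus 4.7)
The plan is to use the standard centroid argument. I would root $D$ at an arbitrary vertex (say, a leaf $v_0$) and then choose $t_D$ to be a vertex of $D$ having at least $\lceil |D|/2 \rceil$ descendants (counting itself) that is as far from $v_0$ as possible. Such a vertex exists, since the root itself has all $|D|$ vertices as descendants.

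With $t_D$ chosen this way, I would then verify the two kinds of components of $D - t_D$ separately. Each ``downward'' component is the subtree hanging off a child $c$ of $t_D$. By the maximality of the distance from $v_0$, the vertex $c$ fails to have $\lceil |D|/2 \rceil$ descendants, so the component rooted at $c$ has size at most $\lfloor |D|/2 \rfloor \le |D|/2$. The single ``upward'' component, consisting of all vertices not descended from $t_D$, has size $|D| - d$, where $d \ge \lceil |D|/2 \rceil$ is the number of descendants of $t_D$ including itself; hence this component also has size at most $|D|/2$.

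There is really no obstacle here; the only thing to be careful about is the bookkeeping of whether $t_D$ is counted among its own descendants, but this can be handled by using ceilings/floors consistently and noting that $\lceil |D|/2 \rceil + \lfloor |D|/2 \rfloor = |D|$. Combining the two bounds yields the claim.
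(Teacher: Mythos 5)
Your proposal is correct and follows essentially the same argument as the paper: root $D$ at a leaf, take $t_D$ to be a furthest vertex whose subtree contains at least half of $D$, and then bound the child subtrees and the remaining ``upward'' component separately. Your explicit ceiling/floor bookkeeping just makes precise what the paper states more briefly.
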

 
\begin{proof}
In order to see Claim~\ref{folklore}, temporarily root $D$ at any leaf vertex $v_L$. Let~$t_D$ be a vertex that is furthest from $v_L$ having the property that $t_D$ and its descendants constitute a set of at least 
$\frac{|D|}2$ vertices. Then each component of $D-t_D$, including the one containing $v_L$, has at most $\frac{|D|}2$ vertices. 
\end{proof}
%
%   SAME THING with DIFFERENT CUT RATIOS
%
% \begin{claim}~\label{folklore}
%{For every tree $D$, and  for any number $c\in [\frac 12, 1]$, there is a vertex $t_{c}(D)$ in $D$ such that each but at most one component of $D-t_{c}(D)$  has size  at most $ (1-c) |D|$. Furthermore, if there is a component of size greater than $ (1-c) |D|$, then this component has size at most $c|D|$. }
% \end{claim}
% 
%\begin{proof}
%In order to see Claim~\ref{folklore}, temporarily root $D$ at any leaf vertex $v_L$. Let~$t_c$ be a vertex that is furthest from $v_L$ having the property that $t_c$ and its descendants constitute a set of at least 
%$(1-c) |D|$ vertices. Then~$D-t_c$ has at most one component $D'$, namely the one containing $v_L$, that could have more than $(1-c) |D|$ vertices. Furthermore, $D'$ has at most $c|D|$ vertices.
%\end{proof}
 
 We can now prove  Claim~\ref{partition_of_A}.

\begin{proof}[Proof of Claim~\ref{partition_of_A}]
Set $T':=T-(T^*-t^*)$ and apply Claim~\ref{folklore} to $T'$. We obtain a vertex $z$. Let $A_z$ be the set of all components of $T'-z$.

First assume there is a set $A_1\subseteq A_z$ with  
\begin{equation}\label{siri}
\frac{|\bigcup_{\bar T\in A}V(\bar T)|}{2}\le |\bigcup_{\bar T\in A_1}V(\bar T)|\le \frac 23m-\gamma_1 m.
\end{equation}
We can assume that $A_1$ is smallest possible with~\eqref{siri}. 
 This choice guarantees that either $A_1$ has no  components with at most $\frac 1{\gamma_0}$ vertices, or $ |\bigcup_{\bar T\in A_1}V(\bar T)|< \frac{|\bigcup_{\bar T\in A}V(\bar T)|}{2}+\frac 1{\gamma_0}$.
So  $Z:=\{z\}$, $A_1$ and $A_2:=A\setminus A_1$ are as desired. 

Now assume there is no set $A_1$ as in~\eqref{siri}. Then 
\begin{equation}\label{mark}
\text{there is not set $A'\subseteq A_z$ with $\frac{m}{3}+\gamma_1 m\le |\bigcup_{\bar T\in A'}V(\bar T)|\le \frac 23m-\gamma_1 m$}
\end{equation}
 (since if there was such a set $A'$, then either $A'$ or $A\setminus A'$ would qualify as~$A_1$).
We claim that  $T'-z$ has three components  $C_1$, $C_2$, $C_3$ such that  \begin{equation}\label{Ciok}
\frac m3- 2\gamma_1 m\le |C_i|\le\frac m3+ \gamma_1 m
\end{equation}
 for $i=1,2,3$  (additionally, $T'-z$ might have a set of very small components). Indeed, take a subset of $A'\subseteq A_z$ such that $|\bigcup_{\bar T\in A'}V(\bar T)|$ is maximised among all $A'$ with $|\bigcup_{\bar T\in A'}V(\bar T)|\le \frac 23m-\gamma_1 m$. Because of~\eqref{mark}, we know that $|\bigcup_{\bar T\in A'}V(\bar T)|<\frac{m}{3}+\gamma_1 m$, and moreover, for any component $C$ from $A\setminus A'$ we have that $|V(C)|\cup \bigcup_{\bar T\in A'}V(\bar T)|> \frac 23m-\gamma_1 m$. So, $|V(C)|> \frac m3-2\gamma_1 m$ for any such $C$, and Claim~\ref{folklore} implies that $|V(C)|\le \frac m2$. Hence there are exactly two such components, $C_1$ and $C_2$,  both of which fulfill~\eqref{Ciok}, and $A=A'\cup\{C_1, C_2\}$.
 
 A similar argument (using the fact that we did not choose $C_1$ together with a subset of $A'$ instead of choosing $A'$) gives that $A'$ contains a component $C_3$ for which~\eqref{Ciok} holds, and that 
 \begin{equation}\label{choco}
|V(T-T^*-C_1-C_2-C_3)|\le 3\gamma_1m.
\end{equation}

Apply Claim~\ref{folklore} to each of the three components $C_1$, $C_2$, $C_3$,  obtaining three vertices, $z_1$, $z_2$, $z_3$, such that for $i=1,2,3$, $z_i\in C_i$ and the components of $C_i-z_i$ have size at most $\frac m6+\frac {\gamma_1}2 m$. 
First assume that one of the vertices~$z_i$, say $z_1$,  is not adjacent to $z$. Then we set $Z:=\{z_1, z\}$. For $A_1$, we choose $C_2$ and some of the components of $C_1-z_1$, in a way that~\eqref{zwischen} of Claim~\ref{partition_of_A} holds for $A_1$. Let $A_2$ be the set of the remaining components of $T'-Z$. As before, we can ensure~\eqref{los_panchosX} by shifting some of the small components from one of $A_1$, $A_2$ to the other, until they have almost the same number of vertices, or the larger one has no small components. Note that most one component of $A=A_1\cup A_2$ is adjacent to both $z_1$ and $z$, which ensures~\eqref{verygoodZ}.

Now assume $z_iz$ is an edge, for each $i=1,2,3$.
Then we set 
 $Z:=\{z_1, z_2, z_3\}$. 
Observe that  the set $A$ of the components of $T'-Z$ is comprised of all components of $C_i-z_i$, for $i=1,2,3$, plus  a component containing $z$ and all vertices outside $C_1\cup C_2\cup C_3$. Each tree in $A$ has exactly one vertex neighbouring $Z$, as desired for~\eqref{verygoodZ}. Moreover, as these trees each have size at most $\frac m6+\frac {\gamma_1}2 m$, it is easy to group them into two sets  $A_1$ and $A_2$ fulfilling~\eqref{zwischen}, and as before, we can shift some of the small trees to ensure~\eqref{los_panchosX}.
\end{proof}

%  We now embed $T-T^{*}$, distinguishing three cases. For this, we define $A_L$ as the set of all componets  of size at least $\frac 1{\gamma_0}$, and set $A_S:=A\setminus A_L$. Furthermore, we say that a component of $A$ is {\em bad} 
%if it is isomorphic to a 3-vertex path whose middle vertex has degree $2$ in $T$. Let
% $B$ be the set of all bad components in $A$. Clearly,  $B\subseteq A_S$.
   We now embed $T-T^{*}$, distinguishing three cases.  For convenience, let us define $A^*\subseteq A$ as the set of those components that contain $t^*$ or are adjacent to more than one vertex of $Z$. By Claim~\ref{partition_of_A}~\eqref{verygoodZ}, $|A^*|\le 2$.
   Also, call 
    $\bar T\in A$ {\em bad} 
if $\bar T$ is isomorphic to a 3-vertex path whose middle vertex has degree $2$ in $T$. Let
 $B$ be the set of all bad components in $A\setminus A^*$.
 
  \paragraph{Embedding $T-T^{*}$ if $B$ is large.}
  
We show that if  
\begin{equation}\label{manyBP3}
\Big|\bigcup_{\bar T\in B}V(\bar T)\Big|>\frac m2,
\end{equation}
then we can embed $T-T^{*}$. Indeed, choose~$W$ as the set $X'_1$. That is, we let  $T^*-L$ be embedded into $\varphi(T^*-L)\subseteq (X_1\cup X_2\cup X_3)\setminus S,$ with $t^*$  embedded into any vertex from $X'_1$. We also embed all vertices from~$Z\setminus\{t^*\}$  into vertices from $X'_1$, respecting possible adjacencies to $t^*$. After doing this, we define, for $i=1,2,3$, $$S_i:=X'_i\setminus \Big(\varphi(T^*-L)\cup\varphi(Z)\cup S\Big).$$

Note that, for $i=1,2,3$, we have that 
$$\frac m3+3\gamma_0 m \ge \ |S_i| \
\ge \frac m3-3\gamma_0 m-5\sqrt\gamma_0 m-\gamma_1 m-4
\ge \frac m3-\frac{11}{10}\gamma_1 m,$$
because of~\eqref{Xi_size} and~\eqref{20b}.

Consider the following way to embed trees from $B$ into $S_2\cup S_3$: 
We  put the first vertex into $S_3$, the second vertex into $S_2$, and the third vertex into $S_3$. Embed as many trees from $B$ as possible in this way. Because of~\eqref{manyBP3}, and because of~\eqref{minnndeg} and~\eqref{minndeg}, we will use all but at most $3\gamma_0 m+3\sqrt\gamma_0 m$ of $S_2$ (and about half of $S_3$).

For the embedding of the remaining trees from $A$ (including those trees from $B$ that have not been embedded yet), note that  for any tree $\bar T\in A\setminus A^*$, we can embed the larger\footnote{If both classes have the same size, we  choose one class arbitrarily.} of its bipartition classes, minus the root $r_{\bar T}$ of $\bar T$, into $S_3$, and the other bipartition class into $S_1$. 
For the trees $\bar T\in A^*$ we can proceed similarly, only taking special care when embedding the parent $p$ of a vertex that is already embedded (either $t^*$ or a vertex from $Z$). We will embed $p$ into either  $S_1$ or $S_3$ (as planned), respecting the adjacencies to its two already embedded neighbours  (both of which see almost all of  $S_1\cup S_3$, so this is not a problem). 
Note that  if vertex $t^*$ belongs to the class that was chosen to be embedded into $S_3$, we `spoil' our plan by one vertex since $t^*$ has been embedded in $S_1$.

We embed trees from $A$ as long as we can in the manner described above. The next tree $\bar T$ is embedded with its larger bipartition class into $S_1\cup S_3$, and the smaller class into $S_1$, using as much as possible of $S_3$. Because of ~\eqref{minnndeg} and~\eqref{minndeg}, we will use all but at most $6\sqrt\gamma_0 m+1$ vertices of $S_3$. The remaining trees from $A$ are embedded into $S_1$, which finishes the embedding.
 
%  So from now on, we may assume   that
%
%\begin{equation}\label{not_too_much_in_B}
%\Big|\bigcup_{\bar T\in B}V(\bar T)\Big|\le\frac m2+\frac 32 \gamma_0m.
%\end{equation}

\paragraph{Embedding $T-T^{*}$ if $B$ is medium sized.}
We now show how to embed $T-T^{*}$ if
\begin{equation}\label{mediumB}
\frac 49m<\Big|\bigcup_{\bar T\in B}V(\bar T)\Big|\le\frac m2.
\end{equation}
In this case, we choose $W$ as the set $X'_3$ if $t^*\in Z$, that is, we let $T^*-L$ be embedded into $\varphi(T^*-L)\subseteq (X_1\cup X_2\cup X_3)\setminus S,$ with vertex $t^*$  embedded into a vertex $\varphi(t^*)$ from~$X'_3$. 
    If $t^*\notin Z$,  we choose $W$ as the set $X'_1$.
    
    Now assume that $T^*-L$ has been embedded. We next embed all vertices from~$Z\setminus\{t^*\}$ into $X'_3$, respecting possible adjacencies to $t^*$.    
   We then set, for $i=1,2,3$, $$S_i:=X'_i\setminus \Big(\varphi(T^*-L)\cup S\cup\varphi(Z\cup\{t^*\})\Big),$$
   and because of~\eqref{Xi_size} and~\eqref{20b},  we have 
   \begin{align}\label{SiiiiBBB}
   \frac m3+3\gamma_0 m \ \ge \  |S_i| & \ge \ 
 \frac m3-(3\gamma_0 +5\sqrt\gamma_0) m-(\gamma_1m -\lceil (\frac{\gamma_1}2)^4 m\rceil)-4\notag \\ 
  &\ge \ \frac m3-\frac{11}{10}\gamma_1m. 
\end{align}

We will now embed  some trees  $\bar T\in B$ in the following way. Embed the first and the third vertex of $\bar T$ into $S_2$, while the second vertex may go to either $S_2$ or $S_3$. We embed as many trees from $B$ as possible in this way, and  fill as much as possible of $S_2$ with them. Then, because of~\eqref{minnndeg},~\eqref{minndeg},~\eqref{mediumB} and~\eqref{SiiiiBBB}, we will have used all but at most $6\gamma_0m$ vertices of $S_2$, and we will also have used at least $\frac m9-3\gamma_0$ vertices of $S_3$. If we did not embed all of $B$ we have used about half of the set $S_3$, and we embed the few remaining trees from $B$ into $S_1$. We finish the embedding by putting all the remaining components into $S_1\cup S_3$, as follows.

Consider any tree $\bar T\in A\setminus (A^*\cup B)$, and let $r_{\bar T}$ denote its root. As the parent of $r_{\bar T}$ was embedded into $S_3$, we have to  embed $r_{\bar T}$ into~$S_1$,  but then we could either embed $\bar T-r_{\bar T}$  so that the even levels go to $S_1$, and the odd levels go to~$S_3$, or we could embed $\bar T-r_{\bar T}$ the other way around  (if there is enough  space). This means that for each $\bar T\in A$, we can embed its larger bipartition class, except possibly for $r_{\bar T}$, into $S_3$, and the rest into~$S_1$. Even better, since any vertex in $S_1$ is adjacent to almost all of $S_1$, we note that any of the vertices that went to $S_3$ could alternatively have been placed in $S_1$. 
Hence, we can embed~$\bar T$ such that for any given  $t\leq \lceil\frac{|\bar T|-1}2\rceil,$ exactly $t$ vertices go to~$S_3$, and the rest go to~$S_1$.

So, as long as there is reasonable space left in both sets $S_1$ and $S_3$, we know that for each tree $\bar T\in A\setminus A^*$ with $|V(\bar T)|\ge 5$,
 one can embed   two fifth of its vertices (or less, if desired) into $S_3$ (as $\frac 25 |V(\bar T)|\le \lceil\frac{|\bar T|-1}2\rceil$ for these trees). For trees $\bar T\in A\setminus A^*$ with $|V(\bar T)|< 5$, it is easy to see that  $\bar T\notin B$ ensures that at least half of its vertices can be embedded into $S_3$ (or less, if desired).
 
For the trees in $A^*$ we can argue analogously, except that the vertex $t^*$ is already embedded into the set $X'_1$, and any neighbour of a vertex from $Z$ is forced to go to $S_1$. Therefore we might have two vertices less than expected going to $S_3$, but this does not matter for the overall strategy.
Thus, we can embed all trees from $A\setminus B$ into $S_2\cup S_3$, which finishes the embedding in this case.

\paragraph{Embedding $T-T^{*}$ if $B$ is small.}
We finally show how to embed  $T-T^*$~if
\begin{equation}\label{smallB}
\Big|\bigcup_{\bar T\in B}V(\bar T)\Big|\le\frac 49m.
\end{equation}
   As in the previous case, we set $W:=X'_3$ if $t^*\in Z$ and  set $W:=X'_1$ otherwise. Without loss of generality, let us assume that $t^*$ lies in a component from $A_1$ (otherwise we rename $A_1$ and $A_2$).    
   
    Now assume that $T^*-L$ has been embedded. We now embed~$Z\setminus\{t^*\}$ into $X'_3$, respecting possible adjacencies to $t^*$. We then embed  the at most $4\beta m$ leaves adjacent to $Z\setminus\{t^*\}$ anywhere in $G$, using~\eqref{minnndeg} and~\eqref{minndeg}. 
For $i=1,2,3$, let $S_i$ be the set of all unused vertices from $X'_i\setminus S$.
   By~\eqref{Xi_size} and~\eqref{20b}, and since $\beta\ll\gamma_0$, we  calculate similarly as for~\eqref{SiiiiBBB} that 
   \begin{align}\label{Siiii}
   \frac m3+3\gamma_0 m \ \ge \  |S_i| %& \ge \  \frac m3-(3\gamma_0 +5\sqrt\gamma_0) m-(\gamma_1m -\lceil (\frac{\gamma_1}2)^4 m\rceil)-4-4\beta m\notag \\   &
  \ge \ \frac m3-\frac{11}{10}\gamma_1m. 
\end{align}

  We will next embed the components from $A$. 
  As in the previous case, we see that 
for any tree $\bar T\in A\setminus (A^*\cup B)$, for any $i\in\{1,2\}$, and for any  $t\leq \lceil\frac{|\bar T|-1}2\rceil$, we can  embed~$\bar T$ into $S_i\cup S_3$ with exactly $t$ vertices going to~$S_3$. For the trees in $A^*$ the same is true if we replace $t$ with $t-1$.

So, as above, the trees in $B$ can be embedded with a third of their vertices (or less, if desired) going to $S_3$.
 The trees in $A\setminus B$ having size less than $\frac 1{\gamma_0}$ can be embedded with two fifth of their vertices, or less, if desired, going to $S_3$. For the at most two trees in $A^*\setminus B$ the same is true, but we might have (in total) two vertices less in $S_3$. For the trees in $A$ having size at least $\frac 1{\gamma_0}$, however, we can work under the stronger assumption that half of their vertices (or less, if desired) may be embedded into $S_3$. 
This is so because there are at most $\gamma_0 m$ such trees, and hence for embedding their roots we will use at most $\gamma_0 m$ vertices, which is small enough to play no role in the calculations.

We will now see that the above implies that, for $i=1,2$, we can embed all trees from $A_i$ into $S_i\cup S_3$, thus  concluding the proof of  Lemma \ref{maya2}.

 Indeed, if both $A_1$ and $A_2$ contain elements of $B$, then by Claim~\ref{partition_of_A}~\eqref{los_panchosX}, they contain roughly the same number of vertices. By~\eqref{smallB}, each $A_i$ has few enough components from $B$ to ensure that there is a reasonable number of vertices in components which can be embedded with at least two fifths in $S_3$. So we can embed all trees from $A_i$, leaving at most $15\sqrt\gamma_0 m$ vertices from $S_i$ unused (here we also use~\eqref{minnndeg},~\eqref{minndeg}, and~\eqref{Siiii}).  
 
 On the other hand, if only the smaller set among $A_1$ and $A_2$ contains elements of $B$, then we can embed this set as before. For the other set we recall that since it does not contain any small trees, half of its vertices (or less, if desired) can be embedded into $S_3$. So we finish the embedding without a problem.

\newcommand{\etalchar}[1]{$^{#1}$}

\end{document}